\documentclass[11pt, letterpaper, oneside]{amsart}

\headheight=8pt     \topmargin=0pt \textheight=624pt
\textwidth=432pt \oddsidemargin=18pt \evensidemargin=18pt

\usepackage{amssymb}
\usepackage{indentfirst}
\usepackage{enumerate}
\usepackage{url}
\usepackage{hyperref}

\theoremstyle{plain}
\newtheorem{thm}{Theorem}[section]

\newtheorem{prop}[thm]{Proposition}
\newtheorem{lemma}[thm]{Lemma}

\theoremstyle{definition}
\newtheorem{defi}[thm]{Definition}

\theoremstyle{remark}
\newtheorem{remark}[thm]{Remark}
\newtheorem{ep}[thm]{Example}

\newcommand{\ZZ}{\ensuremath{\mathbb Z}}

\newcommand{\RR}{\ensuremath{\mathbb R}}
\newcommand{\h}{\ensuremath{\mathfrak{h}}}
\newcommand{\g}{\ensuremath{\mathfrak{g}}}


\newcommand{\li}{\ensuremath{L_{\infty}}}
\newcommand{\cM}{\mathcal{M}}



\newcommand{\linfty}{\li}
\newcommand{\vect}{\mathfrak{X}}
\newcommand{\lin}{\mathrm{lin}}
\newcommand{\comm}{\mathrm{comm}}
\newcommand{\pdiff}[2]{\frac{\partial #1}{\partial #2}}

\newcommand{\tot}{\mathrm{tot}}
\newcommand{\defequal}{:=}
\newcommand{\lie}{\mathcal{L}}
\DeclareMathOperator{\Sh}{Sh}
\DeclareMathOperator{\sgn}{sgn}
\DeclareMathOperator{\ad}{ad}
\newcommand{\Sym}{\mathrm{S}}

\newcommand{\hatS}{\hat{\Sym}}
\DeclareMathOperator{\Hom}{Hom}
\DeclareMathOperator{\End}{End}
\DeclareMathOperator{\Der}{Der}

\DeclareMathOperator{\CDO}{CDO}
\newcommand{\innerprod}[2]{\langle #1, #2 \rangle}

\begin{document}

\title{$L_{\infty}$-algebra actions}

\author{Rajan Mehta}
\address{Department of Mathematics\\ Pennsylvania State University\\ State College, PA 16802}
\email{mehta@math.psu.edu}
\author{Marco Zambon}
\address{ 
Universidad Aut\'onoma de Madrid (Departamento de Matem\'aticas) and ICMAT(CSIC-UAM-UC3M-UCM),
Campus de Cantoblanco,
28049 - Madrid, Spain}
\email{marco.zambon@uam.es,marco.zambon@icmat.es} 

\date{}
\thanks{2010 Mathematics Subject Classification:  primary
58A50,  	
17B66,
16W25,  	
secondary
53D17
. Keywords: $\li$-algebra, graded manifold, action, extension, Lie algebroid.}

\begin{abstract}
We define the notion of action of an $\li$-algebra $\g$ on a graded manifold $\cM$, and show that 
  such an action corresponds to a homological vector field on $\g[1] \times \cM$ of a specific form.  
This generalizes the correspondence between Lie algebra actions on manifolds and transformation Lie algebroids. In particular, we consider actions of $\g$ on a second $\li$-algebra, leading to a notion of ``semidirect product'' of $\li$-algebras more general than those we found in the literature.
\end{abstract}
\thanks{  
}

\maketitle
\tableofcontents

\section{Introduction}\label{intro}
Given a Lie algebra $\g$ and  a manifold $M$, an infinitesimal action of $\g$ on $M$ can be encoded by a \emph{transformation Lie algebroid} structure on the vector bundle $\g \times M$  over $M$. Whereas the definition of the action involves the infinite-dimensional space of vector fields on $M$, the transformation Lie algebroid is a finite-dimensional object.  {Transformation Lie algebroid structures on $\g \times M$ are characterized by the property} that the projection $\g \times M \to \g$ is a Lie algebroid morphism. 

In this note we extend this result by passing from ordinary geometry to the setting of graded geometry. That is, given an $\li$-algebra $\g$ and a graded manifold $\cM$, we define the notion of $\li$-action of $\g$ on $\cM$ (Definition \ref{def:liaction}). We show that such an action can be encoded by a homological vector field on $\g[1] \times \cM$, {and that the homological vector fields arising in this way are characterized by the property that} the projection map $\g[1] \times \cM \to \g[1]$ is a $Q$-manifold morphism (see Theorem
\ref{charvf}). In other words, $\g \times \cM$ becomes a \emph{transformation $\li$-algebroid} over $\cM$.

The zero component of the $\li$-action amounts to a homological vector field $Q_{\cM}$, giving $\cM$ the structure of a $Q$-manifold. On the other hand, if $\cM$ is already a $Q$-manifold, then we may consider $\li$-actions whose zero component agrees with the fixed homological vector field on $\cM$. 

Our main example is the case where $\cM$ is the $Q$-manifold associated to another $\li$-algebra $\h$. In this case, transformation $\li$-algebroids are just $\li$-algebra extensions of $\g$ by $\h$.   Special cases include $\li$-modules, representations up to homotopy, and the ``adjoint representation'' of an $\li$-algebra (\S \ref{sec:lialg}).

Another geometrically interesting example arises when $\cM$ is a degree $1$ $N$-manifold (\S \ref{sec:LA}). In this case, $(\cM,Q_{\cM})$ is a degree $1$ $NQ$-manifold, which is equivalent to a Lie algebroid.

One possible application of this work is the integration of certain $Q$-manifold extensions, as follows.
Given an $\li$-algebra $\g$ and a graded manifold $\cM$,
by Theorem \ref{charvf}, \emph{any} $Q$-manifold structure on $\g[1] \times \cM$ projecting to the one on $\g[1]$   arises from an $\li$-action of $\g$ on $\cM$. In some cases  this action can be integrated, and the integrated action can be used to construct the   integration of the $Q$-manifold $\g[1] \times \cM$.
This integration procedure has been worked out by Sheng and Zhu in \cite{shengzhusemi, shengzhuInt} when $\g$ is  Lie algebra, 
$\cM$ is a 2-term complex of vector spaces, and the $\li$-action is linear (a representation up to homotopy). It would be 
 particularly interesting to apply this procedure to integrate extensions of $\li$-algebras.

We expect our results to hold more generally when $\g[1]$ is replaced by an arbitrary $Q$-manifold. Such a result would be relevant to the integration of Courant algebroids \cite{lbs:integration, mt:double, CCShengHigherExt}.

\noindent\textbf{Conventions:}  Multibrackets of $\li$-algebras are  always denoted by $[\dots]_k$. They are graded skew-symmetric, of degree $2-k$. Multibrackets of $\li[1]$-algebras are   denoted by $\{\dots\}_k$. They are graded  symmetric, of degree  one.\\

\noindent\textbf{Acknowledgements:} 
R. Mehta acknowledges support of CMUP at the Universidade do Porto, where this project was initiated.
M. Zambon was partially supported by CMUP and FCT (Portugal) through the programs POCTI, POSI and Ciencia 2007, by projects PTDC/MAT/098770/2008 and PTDC/MAT/099880/2008 (Portugal) and by  projects MICINN RYC-2009-04065 and MTM2009-08166-E (Spain).  

\section{Review: \texorpdfstring{$L_{\infty}$}{L-infinity}-algebras as \texorpdfstring{$Q$}{Q}-manifolds}\label{sec:der}

In this section we review $\li$-algebras and their description in terms of $Q$-manifolds (\cite{LadaMarkl},\cite[\S 1.1]{FloDiss}).

Let $V=\oplus_{n\in \ZZ}V_n$ be a graded vector space. 
Letting $TV:=\oplus_{i\ge 0}T^i V=\RR \oplus V \oplus (V\otimes V) \oplus \cdots$ be the tensor algebra over $V$, we define the graded symmetric algebra over $V$ as 
$$SV:=TV/\langle v \otimes v' -(-1)^{|v||v'|}v' \otimes v \rangle,$$ 
where $|v|$ denotes the degree of a homogeneous element $v \in V$. Similarly, the graded skew-symmetric algebra over $V$ is defined as
$$\wedge V:=TV/ \langle v \otimes v' +(-1)^{|v||v'|}v' \otimes v \rangle.$$  
We introduce the notation $S^+ V:=\oplus_{i\ge 1}S^i V$ and $\wedge^+ V:=\oplus_{i\ge 1}\wedge ^i V$.

For $j\ge 1$, let $S_j$ denote the permutation group in $j$ letters. For any homogeneous elements $v_1,\dots,v_j \in V$ and $\tau\in S_j$, the \emph{Koszul signs} $\epsilon(\tau)$ and $\chi(\tau)$ are defined by the equations
$$v_{\tau_1}\cdots v_{\tau_j}=\epsilon(\tau)v_{1}\cdots v_{j}$$
and 
$$v_{\tau_1}\wedge \cdots \wedge v_{\tau_j}=\chi(\tau)v_{1} \wedge \cdots \wedge v_{j},$$
where the products are taken in $SV$ and $\wedge V$ respectively.
The two signs are related by the equation $\chi(\tau)=\sgn(\tau)\epsilon(\tau)$. We stress that $\epsilon(\tau)$ and $\chi(\tau)$ depend on the $v_i$ in addition to $\tau$, although the notation does not make this dependence explicit.

Recall that $\tau\in S_n$ is called an \emph{$(i, n-i)$-unshuffle} if it satisfies the inequalities $\tau(1)<\dots<\tau(i)$ and $\tau(i+1)<\dots<\tau(n)$. The set of $(i, n-i)$-unshuffles is denoted by $\Sh(i, n-i).$ Following \cite[Def. 2.1]{LadaMarkl}\footnote{In \cite{LadaMarkl}, the multi-brackets have degree $k-2$, instead of $2-k$. They are related to ours by simply inverting the grading of the graded vector space $V$.} and \cite[Def. 5]{KajSta}, we make the following definitions.  

\begin{defi}\label{li}
An \emph{$\li$-algebra} is a graded vector space $V$ equipped with a collection of linear maps $[\cdots]_k \colon \wedge ^kV\longrightarrow V$ of degree $2-k$, for $k\ge1$, such that
$$\sum_{i=1}^n (-1)^{i(n-i)}\sum_{\tau\in \Sh(i,n-i)}\chi(\tau)[[v_{\tau(1)}, \dots,v_{\tau(i)}]_i,v_{\tau(i+1)}, \dots, v_{\tau(n)} ]_{n-i+1}=0$$
for every collection of homogeneous elements $v_1, \dots, v_n \in V$. 
\end{defi}  

\begin{defi}\label{li1}
An \emph{$\li[1]$-algebra}\footnote{The name ``$\li[1]$-algebra'' is borrowed from  \cite {FloDiss}} is a graded vector space $V$ equipped with a collection of linear maps $\{\cdots\}_k \colon S^kV\longrightarrow V$ of degree $1$, for $k\ge1$, such that 
$$\sum_{i=1}^n \sum_{\tau\in \Sh(i,n-i)}\epsilon (\tau)\{\{v_{\tau(1)}, \dots,v_{\tau(i)}\}_i,v_{\tau(i+1)}, \dots, v_{\tau(n)} \}_{n-i+1}=0$$
for every collection of homogeneous elements $v_1, \dots, v_n \in V$.
\end{defi}

There is a bijection \cite[Rem. 2.1]{Vor} between \li-algebra structures on a graded vector space $V$ and $\li[1]$-algebra structures on $V[1]$, the graded vector space defined by $(V[1])_i:=V_{i+1}$. The multibrackets are related by the \emph{d\'ecalage isomorphism}
\begin{equation}\label{deca}
 (\wedge^n V)[n] \cong S^n(V[1]),\;\; v_1\cdots v_n \mapsto v_1\cdots v_n\cdot (-1)^{(n-1)|v_{1}|+\dots+2|v_{n-2}|+|v_{n-1}|}.
 \end{equation}
We  refer to an $\li[1]$-algebra structure as a \emph{Lie$[1]$ algebra}  when only $\{\cdots\}_2$ is non-trivial, and we use the terminology \emph{DGL$[1]$-algebra} when only $\{\cdots\}_1$ and $\{\cdots\}_2$ are non-trivial.

We provide the notion of $\li[1]$-algebra morphism \cite[Def. 6]{KajSta}
\cite[Def. 1.7]{FloDiss} only in a special case.

\begin{defi}\label{li1mor}  
Let $V$  be an {$\li[1]$-algebra} with multibrackets   $\{\cdots\}$, and let $W$ be a DGL$[1]$-algebra with brackets $\textbf{d}$ and $\textbf{\{}\cdot,\cdot\textbf{\}}$.
An \emph{$\li[1]$-algebra morphism} from $V$ to $W$, denoted $V \rightsquigarrow W$, is a degree $0$ linear map $\phi \colon S^+ V\to W$ such that  for all $n \geq 1$
 \begin{equation}\label{li1morph}
 	\begin{split}
 &\sum_{i=1}^n \sum_{\tau\in \Sh(i,n-i)}\epsilon (\tau) \phi_{n-i+1}(\{v_{\tau(1)}, \dots,v_{\tau(i)}\}_i,v_{\tau(i+1)}, \dots, v_{\tau(n)}) \\
=&\textbf{d}\phi_n(v_{1}, \dots,v_{n})
+\frac{1}{2}
\sum_{j=1 }^{n-1}  \sum_{\tau\in \Sh(j,n-j)} \epsilon (\tau)
\textbf{\{}\phi_{j}(v_{\tau(1)}\;,\; \dots,v_{\tau(j)}), 
\phi_{n-j}(v_{\tau(j+1)}, \dots,v_{\tau(n)}
)\textbf{\}}.		
 	\end{split}
 \end{equation}
for every collection of homogeneous elements $v_1, \dots, v_n \in V$.
\end{defi}

In the setting of the above definition, a \emph{curved} $\li[1]$-algebra morphism $V \rightsquigarrow W$ consists of a degree $0$ linear map $\phi: SV \to W$ satisfying, for $n \geq 0$, a variant of 
\eqref{li1morph} where the index $j$ on the right side of the equation runs from $0$ to $n$. A key point is that nontrivially curved morphisms exist even though we are not considering $\li[1]$-algebras equipped with $0$-brackets.

\begin{remark}
If $\phi: SV \to W$ is a degree $0$ linear map, then the zero component $\phi_0 \colon \RR \to W_0$ gives rise to an element $\phi_0(1)\in W_0$, which by abuse of notation we denote by $\phi_0$. The curved variant of \eqref{li1morph} for $n=0$ then reads 
$0=\textbf{d}\phi_0+\frac{1}{2}\textbf{\{}\phi_0,\phi_0\textbf{\}}.$
In other words, if $\phi$ is a curved $\li[1]$-algebra morphism, then $\phi_0$ is a Maurer-Cartan element of $W$.
\end{remark}

The notion of (curved) $\li$-algebra morphism into a DGLA corresponds to Definition \ref{li1mor} (and the following paragraph) via the d\'ecalage isomorphism \eqref{deca}.
 
The following fact, whose proof follows easily from the above, is implicit in \cite{K}:
\begin{lemma}\label{lemma:curved}
Let $V$ be an $L_{\infty}[1]$-algebra, let $(W,\textbf{\{}\cdot,\cdot\textbf{\}})$  be a Lie$[1]$ algebra, and let $\phi \colon SV\to W$ be a degree zero linear map.  Then
$\phi$ is a curved $\li[1]$-algebra morphism from $V$ to $W$ if and only if 
$\textbf{\{}\phi_0,\phi_0\textbf{\}}=0$ and $\phi_+ \defequal \sum_{i\ge 1} \phi_i$ is an $\li[1]$-algebra morphism from $V$ to the DGL$[1]$-algebra $(W,\textbf{\{}\phi_0,\cdot\textbf{\}},\textbf{\{}\cdot,\cdot\textbf{\}})$.
\end{lemma}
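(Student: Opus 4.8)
The plan is to expand the defining identities of a curved $\li[1]$-morphism $V\rightsquigarrow W$ arity by arity and to match them, term by term, against the identities \eqref{li1morph} of an ordinary $\li[1]$-morphism into a suitably twisted target. Since $W$ is a Lie$[1]$ algebra, the differential term $\textbf{d}\phi_n(v_1,\dots,v_n)$ in the curved variant of \eqref{li1morph} vanishes, so for each $n\ge 0$ the arity-$n$ equation relates the ``bracket-then-$\phi$'' sum on the left to $\frac{1}{2}$ times the sum over all $0\le j\le n$ of the $\textbf{\{}\phi_j,\phi_{n-j}\textbf{\}}$-terms on the right, now including the two extreme terms $j=0$ and $j=n$, which involve $\phi_0$.

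First I would dispose of the arity-$0$ equation: its left-hand side is empty and its right-hand side is $\frac{1}{2}\textbf{\{}\phi_0,\phi_0\textbf{\}}$, so it is exactly the condition $\textbf{\{}\phi_0,\phi_0\textbf{\}}=0$ (equivalently, $\phi_0$ is a Maurer--Cartan element, as in the Remark above). Granting this, $\textbf{d}'\defequal\textbf{\{}\phi_0,\cdot\textbf{\}}$ has degree one (because $\phi_0\in W_0$ and $\textbf{\{}\cdot,\cdot\textbf{\}}$ has degree one), squares to zero, and is a derivation of $\textbf{\{}\cdot,\cdot\textbf{\}}$ --- the latter two by the graded Jacobi identity of the Lie$[1]$ bracket --- so $(W,\textbf{d}',\textbf{\{}\cdot,\cdot\textbf{\}})$ is genuinely a DGL$[1]$-algebra and the right-hand side of the claimed equivalence makes sense.

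Next, fix $n\ge 1$ and split off the two extreme terms $j=0$ and $j=n$ from the right-hand sum. For $j=0$ the only $(0,n)$-unshuffle is the identity (with $\epsilon=1$), so that term equals $\frac{1}{2}\textbf{\{}\phi_0,\phi_n(v_1,\dots,v_n)\textbf{\}}$; symmetrically the $j=n$ term equals $\frac{1}{2}\textbf{\{}\phi_n(v_1,\dots,v_n),\phi_0\textbf{\}}$. Since $\phi_0$ has degree zero and $\textbf{\{}\cdot,\cdot\textbf{\}}$ is graded symmetric, these two coincide, and their sum is $\textbf{\{}\phi_0,\phi_n(v_1,\dots,v_n)\textbf{\}}=\textbf{d}'\phi_n(v_1,\dots,v_n)$. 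The surviving middle terms $1\le j\le n-1$ on the right, together with the whole left-hand side --- in which every $\phi$ that appears has arity between $1$ and $n$, since $1\le i\le n$ --- are then literally the arity-$n$ instance of \eqref{li1morph} for $\phi_+=\sum_{i\ge 1}\phi_i\colon S^+V\to W$ with target $(W,\textbf{d}',\textbf{\{}\cdot,\cdot\textbf{\}})$. Hence, once $\textbf{\{}\phi_0,\phi_0\textbf{\}}=0$ is assumed, the arity-$n$ curved equation for $\phi$ and the arity-$n$ morphism equation for $\phi_+$ are the same equation, and the equivalence follows upon quantifying over all $n\ge 0$.

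I do not expect a genuine obstacle: the proof is a reorganization of one family of identities rather than a new estimate or construction. The only points requiring care are the sign-and-symmetry bookkeeping that fuses the $j=0$ and $j=n$ contributions into the twisted differential $\textbf{d}'$ (which uses crucially that $\deg\phi_0=0$, so that no Koszul sign intervenes), and the check that passing between $\phi$ on $SV$ and $\phi_+$ on $S^+V$ loses nothing --- that is, that the arity-$0$ slot of $\phi$ enters the curved identities only through those two extreme terms. Both are immediate from the conventions fixed in Section \ref{sec:der}.
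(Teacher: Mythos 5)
Your proof is correct: the arity-$0$ curved equation is exactly $\{\phi_0,\phi_0\}=0$, and for each $n\ge 1$ the $j=0$ and $j=n$ unshuffle terms on the right fuse (using $|\phi_0|=0$ and graded symmetry) into the twisted differential $\{\phi_0,\cdot\}$ applied to $\phi_n$, so the curved identity for $\phi$ is literally the arity-$n$ morphism identity \eqref{li1morph} for $\phi_+$ into $(W,\{\phi_0,\cdot\},\{\cdot,\cdot\})$, while $\phi_0$ never appears on the left side. The paper gives no proof of this lemma, asserting only that it ``follows easily,'' and your term-by-term reorganization (including the check that $\{\phi_0,\cdot\}$ squares to zero and is a derivation once $\{\phi_0,\phi_0\}=0$ holds) is precisely the intended argument.
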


Now we turn to the supergeometric description of $\li$-algebras.
\begin{defi}\label{Qmfld}
A \emph{Q-manifold} is a graded manifold {(see, for example, \cite{cattaneo:super,rajthesis})} equipped with a 
\emph{homological} vector field, that is, a degree $1$ vector field $Q$ such that $Q^2 = \frac{1}{2}[Q,Q] = 0$.
\end{defi}

Given a \emph{finite dimensional} graded vector space $V$, there is a bijection between $L_{\infty}[1]$-algebra structures on $V$ and formal homological vector fields $Q$ on $V$ vanishing at the origin.  (The modifier ``formal'' refers to the fact that $Q$ is a derivation of 
the formal power series on $V$.) The correspondence is given by Voronov's \emph{higher derived bracket} construction \cite[Ex. 4.1]{Vor}. Specifically, the $\li[1]$-multibrackets induced by a formal homological vector field $Q$ are defined by
 \begin{equation}\label{eq:derbr}
 \{v_1,\dots,v_n\}_n = [[[Q,\iota_{v_1}],\dots],\iota_{v_n}]|_0.
\end{equation}
Here  ``$|_0$'' denotes evaluation at the origin in $V$, and $\iota_v$ is the (formal) vector field that acts on linear functions $\xi \in V^*$ by $\langle v, \xi \rangle$.

\section{An alternative characterization of \texorpdfstring{$L_{\infty}$}{L-infinity}-morphisms}

The goal of this section is to obtain a useful formula characterizing curved $\linfty[1]$-morphisms $U \rightsquigarrow V$ in the case where $U$ is finite-dimensional (but otherwise has an arbitrary $\linfty[1]$-algebra structure) and where $V$ is a Lie$[1]$ algebra (but is possibly infinite-dimensional). At first, this may seem like a peculiar set of assumptions, but we will see in \S	 \ref{sec:actionq} that this setting is exactly the one that arises in the context of actions on graded manifolds.

Let $U$ be a finite-dimensional graded vector space, and let $V$ be a (possibly infinite-dimensional) graded vector space. Denote by $\hatS(U^*)$ the algebra of formal power series on $U$. We may consider elements of $\hatS(U^*) \otimes V$ to be $V$-valued formal power series on $U$. 

Let $\Hom(SU,V)$ denote the graded vector space of linear (not necessarily degree-preserving) maps from $SU$ to $V$. To any $X \in \hatS(U^*) \otimes V$, we may associate an element $\phi_X \in \Hom(SU, V)$, given by
\begin{equation}\label{eq:phi}
     \phi_X (e_1\cdots e_n) = [[\cdots [X, \iota_{e_1}],\dots],\iota_{e_n}]|_0 
\end{equation}
for $e_i \in U$, where $0$ denotes the origin in $U$. By definition, the brackets on the right side of \eqref{eq:phi} are given by $[X, \iota_e] = -(-1)^{|X||e|} \iota_e X$. 
(One should think of this bracket as the Lie bracket of vector fields on the graded manifold $U\times V$.)
\begin{lemma}\label{lemma:phiiso}
     The map $\phi: X \mapsto \phi_X$ is a degree-preserving isomorphism from $\hatS(U^*) \otimes V$ to $\Hom(\Sym U,V)$. 
\end{lemma}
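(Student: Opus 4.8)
The plan is to unwind the definitions on both sides and exhibit an explicit inverse. First I would observe that $\hatS(U^*) \otimes V = \prod_{n \geq 0} \Sym^n(U^*) \otimes V$, and that an element $X$ decomposes as $X = \sum_n X_n$ with $X_n \in \Sym^n(U^*)\otimes V$ homogeneous of polynomial degree $n$. Correspondingly, $\Hom(\Sym U, V) = \prod_{n\geq 0} \Hom(\Sym^n U, V)$. The key point is that the formula \eqref{eq:phi} is ``graded'' with respect to polynomial degree: applying $n$ contraction operators $\iota_{e_i}$ to $X$ and then evaluating at the origin kills every homogeneous component $X_m$ with $m \neq n$, because $\iota_e$ lowers polynomial degree by one, and a polynomial of positive degree vanishes at the origin while a polynomial of negative degree is zero. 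Hence $\phi_X(e_1\cdots e_n)$ depends only on $X_n$, and it suffices to check that for each fixed $n$, the map $X_n \mapsto \phi_{X_n}|_{\Sym^n U}$ is an isomorphism $\Sym^n(U^*)\otimes V \xrightarrow{\sim} \Hom(\Sym^n U, V)$.

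Next I would reduce this to the classical statement that, for a finite-dimensional graded vector space $U$, contraction gives an isomorphism $\Sym^n(U^*) \cong (\Sym^n U)^*$, i.e.\ polynomials of degree $n$ on $U$ are the same as linear functionals on $\Sym^n U$, and then tensor with $V$ (which is harmless since we are not requiring degree-preservation, so $\Hom(-, V)$ behaves well). Concretely, writing $\xi \in \Sym^n(U^*)$ as a polynomial and $e_1 \cdots e_n \in \Sym^n U$, the iterated bracket $[[\cdots[\xi \otimes v, \iota_{e_1}],\dots],\iota_{e_n}]|_0$ computes (up to the Koszul signs packaged into the bracket convention $[X,\iota_e] = -(-1)^{|X||e|}\iota_e X$) the polarization pairing $\langle \xi, e_1\cdots e_n\rangle \cdot v$. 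Since $U$ is finite-dimensional, this pairing is a perfect pairing between $\Sym^n(U^*)$ and $\Sym^n U$, so the map is injective; and it is surjective because, picking a basis $\{e_a\}$ of $U$ with dual coordinates $\{x^a\}$, any $\psi \in \Hom(\Sym^n U, V)$ is matched by $X_n = \sum \frac{1}{?}\, x^{a_1}\cdots x^{a_n} \otimes \psi(e_{a_1}\cdots e_{a_n})$ (with appropriate symmetrization constants and signs), which is manifestly an element of $\Sym^n(U^*)\otimes V$.

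Finally I would check that $\phi$ is degree-preserving. The vector field $\iota_e$ has degree $|e|$ (it acts on linear functions $\xi \in U^*$ by $\langle e, \xi\rangle$, so it sends degree $-|e|$ to degree $0$, meaning as a derivation it has degree $|e|$ — matching the convention that $\iota_v$ in \eqref{eq:derbr} raises degree; here one must be slightly careful, but the bracket $[X,\iota_e] = -(-1)^{|X||e|}\iota_e X$ has total degree $|X| + |e|$, and evaluating at the origin preserves degree). Thus $\phi_X(e_1\cdots e_n)$ has degree $|X| + |e_1| + \dots + |e_n|$, which is exactly the condition that $\phi_X$, as a map $\Sym U \to V$, has degree $|X|$. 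Running this in reverse shows the inverse map is also degree-preserving.

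The main obstacle I anticipate is purely bookkeeping rather than conceptual: getting all the Koszul signs and combinatorial normalization constants right in the explicit inverse, and in verifying that the iterated-bracket formula \eqref{eq:phi} indeed reproduces the polarization pairing. These are exactly the kinds of sign computations that the décalage isomorphism \eqref{deca} and the bracket convention $[X,\iota_e]=-(-1)^{|X||e|}\iota_e X$ are designed to handle, and since the statement is an isomorphism (not an identity with a prescribed normalization), one has some freedom; the real content — that finite-dimensionality of $U$ makes the pairing perfect degree-by-degree — is straightforward.
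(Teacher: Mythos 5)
Your proposal is correct and follows essentially the same route as the paper's proof: decompose by symmetric degree into maps $\Sym^k(U^*)\otimes V \to \Hom(\Sym^k U, V)$, use finite-dimensionality of $U$ to get the perfect pairing $\Sym^k(U^*)\cong(\Sym^k U)^*$ (checked in a basis), and then handle the $V$ factor — the paper via $\Hom(\Sym^k U,V)\cong(\Sym^k U)^*\otimes V$ and tensoring the isomorphism with $V$, you via an explicit basis-built inverse, which is only a cosmetic difference. The sign and normalization bookkeeping you defer is likewise left implicit in the paper, so nothing essential is missing.
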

\begin{proof}
We first observe that $\phi$ is the direct product of maps $\phi_k: \Sym^k(U^*) \otimes V \to \Hom(\Sym^k U, V)$, so it suffices to prove that each $\phi_k$ is an isomorphism. Next, consider the case $V = \RR$. In this case, it is fairly easy to see (for example, in terms of a basis) that $\phi$ induces isomorphisms from $\Sym^k(U^*)$ to $\Hom(\Sym^k U, \RR) = (\Sym^k U)^*$.

Returning to the general case, we can identify $\Hom(\Sym^k U, V)$ with $(\Sym^k U)^* \otimes V$ in the obvious way. The result then follows from right-exactness of the tensor product.
\end{proof}

Now suppose that $V$ is equipped with a Lie$[1]$ algebra structure, with binary bracket $\{\cdot,\cdot\}_V$. Then we may extend the bracket to $\hatS(U^*) \otimes V$ by graded linearity:
\begin{equation}
     \{f \otimes v, f' \otimes v'\}_V = (-1)^{|f'||v|} ff' \otimes \{v,v'\}_V. \label{eqn:binsuv}
\end{equation}
The relationship between the operation \eqref{eqn:binsuv} and the map $\phi$ is described by the following lemma, which can be proven directly for $X = f \otimes v$.
\begin{lemma}\label{lemma:binsuv}
The following equation holds for all $X, X' \in \hatS(U^*) \otimes V$ and $e_i \in U$:
\begin{multline*}
	     \phi_{\{X,X'\}_V}(e_1 \cdots e_n) = \\
\sum_{j=0}^n \sum_{\tau \in \Sh(j,n-j)} \epsilon(\tau)(-1)^{|X'|(|e_{\tau(1)}| + \cdots + |e_{\tau(j)}|)} \left\{ \phi_X (e_{\tau(1)} \cdots e_{\tau(j)}), \phi_{X'}(e_{\tau(j+1)} \cdots e_{\tau(n)}) \right\}_V.
\end{multline*}
\end{lemma}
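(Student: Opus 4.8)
The plan is to reduce the general statement to the special case $X = f \otimes v$, $X' = f' \otimes v'$ by graded bilinearity, then verify that special case by a direct computation unwinding both sides. Since $\phi$ is linear in its argument (it is the direct product of the linear maps $\phi_k$ from Lemma \ref{lemma:phiiso}), and the extended bracket \eqref{eqn:binsuv} is graded bilinear, both sides of the claimed identity are graded bilinear in $(X,X')$. Hence it suffices to check the formula when $X$ and $X'$ are decomposable tensors, which is the case the excerpt already flags as the one that can be done ``directly.''

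For that case, first I would compute the left-hand side. By \eqref{eqn:binsuv} we have $\{X,X'\}_V = (-1)^{|f'||v|} ff' \otimes \{v,v'\}_V$, so by \eqref{eq:phi}, $\phi_{\{X,X'\}_V}(e_1\cdots e_n)$ is obtained by applying the iterated bracket $[[\cdots[\,\cdot\,, \iota_{e_1}],\dots],\iota_{e_n}]$ and evaluating at the origin. Since each $\iota_{e_i}$ is a vector field in the ``$U$-directions'' only and acts as a derivation, the iterated bracket against the product $ff'$ distributes by the (graded) Leibniz rule over all ways of partitioning $e_1,\dots,e_n$ into a subset acting on $f$ and its complement acting on $f'$; reordering each such subset into increasing order produces precisely the unshuffles $\tau \in \Sh(j,n-j)$, together with the Koszul sign $\epsilon(\tau)$ from permuting the $e_i$ past one another. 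The scalar-valued derivative of $f$ evaluated at the origin against $e_{\tau(1)},\dots,e_{\tau(j)}$ is exactly $\phi_f(e_{\tau(1)}\cdots e_{\tau(j)})$ in the $V=\RR$ case of Lemma \ref{lemma:phiiso}, and likewise for $f'$; the remaining $\{v,v'\}_V$ factor reassembles, via \eqref{eqn:binsuv} again, into $\{\phi_X(e_{\tau(1)}\cdots e_{\tau(j)}), \phi_{X'}(e_{\tau(j+1)}\cdots e_{\tau(n)})\}_V$. The one extra sign to track is the Koszul sign incurred when the derivations $\iota_{e_{\tau(j+1)}},\dots$ (destined for $f'$) are moved past the factor $\phi_f(\cdots) \in V$ of degree equal to $|X| - |f| - (|e_{\tau(1)}|+\cdots+|e_{\tau(j)}|)$ modulo the scalar part; matching this against the stated sign $(-1)^{|X'|(|e_{\tau(1)}|+\cdots+|e_{\tau(j)}|)}$ and the $(-1)^{|f'||v|}$ from \eqref{eqn:binsuv} is the bookkeeping step.

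I expect the main obstacle to be exactly this sign reconciliation: carefully accounting for the Koszul signs produced by (a) commuting the $\iota_{e_i}$ past each other to form unshuffles, (b) the convention $[X,\iota_e] = -(-1)^{|X||e|}\iota_e X$, (c) the $(-1)^{|f'||v|}$ in the definition \eqref{eqn:binsuv} of the extended bracket, and (d) moving derivations past a $V$-valued intermediate result, and checking that the net sign is precisely $\epsilon(\tau)(-1)^{|X'|(|e_{\tau(1)}|+\cdots+|e_{\tau(j)}|)}$. A clean way to organize this is to introduce coordinates: pick a homogeneous basis $\{u_a\}$ of $U$ with dual coordinates $\{\xi^a\}$ on $U$, write $f,f'$ as formal power series in the $\xi^a$, note that $\iota_{u_a} = \partial/\partial\xi^a$ (up to the degree conventions already fixed in \S\ref{sec:der}), and observe that $\phi_f(u_{a_1}\cdots u_{a_j})$ is the corresponding coefficient of $f$; then the identity becomes the graded Leibniz rule for iterated partial derivatives of a product, with all signs dictated by the Koszul rule, and comparing with the $n=0$ through $n=2$ cases checked by hand confirms the general pattern by induction on $n$.
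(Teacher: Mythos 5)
Your proposal is correct and follows essentially the same route as the paper, which simply notes that the lemma ``can be proven directly for $X = f \otimes v$'': you reduce to decomposable tensors by graded bilinearity (legitimate here since both sides only involve finitely many polynomial-degree components when evaluated on $e_1\cdots e_n$) and then verify the identity by the graded Leibniz rule for the iterated contractions, with the unshuffles and the signs $\epsilon(\tau)$ and $(-1)^{|X'|(|e_{\tau(1)}|+\cdots+|e_{\tau(j)}|)}$ emerging from the Koszul bookkeeping you describe.
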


Suppose further that $U$ is equipped with an arbitrary $\li[1]$-algebra structure, for which the corresponding homological vector field is $Q_U$. For any degree $0$ element $X \in \hatS(U^*) \otimes V$, we may ask whether $\phi_X \in \Hom(\Sym U , V)$ is a curved $\linfty[1]$-algebra morphism. The following statement expresses this property directly in terms of $X$.
\begin{prop}\label{prop:limor}Let $U$ be a finite dimensional  $\li[1]$-algebra,  $V$  a Lie$[1]$ algebra, and  $X \in \hatS(U^*) \otimes V$ of degree zero.
   Then $\phi_X$ is a curved $\linfty[1]$-algebra morphism $U \rightsquigarrow V$ if and only if 
\begin{equation*}
     Q_U(X) = \frac{1}{2}\{X,X\}_V.
\end{equation*}  
Here, the left side is defined by $Q_U(f \otimes v)=Q_U(f)\otimes v$.
\end{prop}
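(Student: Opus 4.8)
The plan is to directly compare the two sides of the curved $\li[1]$-morphism equation \eqref{li1morph} (with the index $j$ running from $0$ to $n$) with the two sides of the equation $Q_U(X) = \frac{1}{2}\{X,X\}_V$, using the dictionary provided by Lemmas \ref{lemma:phiiso} and \ref{lemma:binsuv}. Since $V$ is a Lie$[1]$ algebra, its only nontrivial bracket is $\textbf{\{}\cdot,\cdot\textbf{\}} = \{\cdot,\cdot\}_V$ and $\textbf{d} = 0$, so the curved morphism condition for $\phi_X$ reads, for each $n \geq 0$,
\begin{equation*}
\sum_{i=1}^n \sum_{\tau\in \Sh(i,n-i)}\epsilon (\tau)\, (\phi_X)_{n-i+1}(\{e_{\tau(1)}, \dots,e_{\tau(i)}\}_i, e_{\tau(i+1)}, \dots, e_{\tau(n)})
= \frac{1}{2} \sum_{j=0}^{n} \sum_{\tau\in \Sh(j,n-j)} \epsilon(\tau)\, \{(\phi_X)_j(e_{\tau(1)}, \dots, e_{\tau(j)}), (\phi_X)_{n-j}(e_{\tau(j+1)}, \dots, e_{\tau(n)})\}_V.
\end{equation*}
By Lemma \ref{lemma:binsuv} applied with $X' = X$, the right-hand side is precisely $\frac{1}{2}\phi_{\{X,X\}_V}(e_1\cdots e_n)$; note the extra sign factors $(-1)^{|X|(\cdots)}$ disappear because $X$ has degree zero. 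So the right side of the morphism condition is $\phi$ of the right side of the desired equation.

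For the left-hand side, I would show that $\phi_{Q_U(X)}(e_1\cdots e_n)$ equals exactly the left-hand side of the morphism condition above. This is the computation that ties the higher derived bracket formula \eqref{eq:derbr} for the multibrackets $\{\cdots\}_i$ of $U$ — obtained as iterated brackets of $Q_U$ with the $\iota_{e}$'s, evaluated at $0$ — to the iterated bracket formula \eqref{eq:phi} defining $\phi$. Concretely, expanding $\phi_{Q_U(X)}(e_1 \cdots e_n) = [[\cdots[Q_U(X), \iota_{e_1}], \dots], \iota_{e_n}]|_0$ and using the Leibniz-type behavior of the bracket with $Q_U$, one groups the $\iota_{e_i}$ according to whether they get applied ``inside'' the $Q_U$-part (producing a factor $\{e_{\tau(1)},\dots,e_{\tau(i)}\}_i$ via \eqref{eq:derbr}) or ``outside'' (contributing the remaining arguments of $(\phi_X)_{n-i+1}$); the combinatorics of which subset goes where is exactly the unshuffle sum $\sum_{\tau \in \Sh(i,n-i)}$, and the bookkeeping of Koszul signs from commuting the $\iota$'s past one another yields the $\epsilon(\tau)$ factors. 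Since $Q_U$ vanishes at the origin, the term $i=0$ does not appear, matching the index range $i=1,\dots,n$ on the left of the morphism condition.

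Having established $\phi_{Q_U(X)} = $ (left side of morphism condition) and $\frac{1}{2}\phi_{\{X,X\}_V} = $ (right side of morphism condition) for all $n$, the equivalence follows by invoking Lemma \ref{lemma:phiiso}: $\phi$ is injective, so $\phi_{Q_U(X)} = \frac{1}{2}\phi_{\{X,X\}_V}$ as elements of $\Hom(SU, V)$ if and only if $Q_U(X) = \frac{1}{2}\{X,X\}_V$ in $\hatS(U^*) \otimes V$. I would also remark that $Q_U(X)$ and $\{X,X\}_V$ are each of degree $1$ (since $X$ is of degree $0$, $Q_U$ has degree $1$, and $\{\cdot,\cdot\}_V$ has degree $1$), so both sides live in the same graded component and the comparison is legitimate.

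The main obstacle I anticipate is the sign bookkeeping in the middle paragraph: carefully tracking the Koszul signs that arise when the interior contraction operators $\iota_{e_i}$ are reordered so that one block acts first against $Q_U$ and the complementary block acts afterwards, and checking that these combine to give exactly $\epsilon(\tau)$ with no leftover signs. One clean way to organize this is to reduce to the case $X = f \otimes v$ (as suggested for the proof of Lemma \ref{lemma:binsuv}), where $Q_U(f \otimes v) = Q_U(f) \otimes v$ makes the $U$-side and $V$-side factors fully decoupled, so that the sign computation becomes a purely $\hatS(U^*)$-level statement that can be checked against the already-known scalar case $V = \RR$ from Lemma \ref{lemma:phiiso}; the general $X$ then follows by linearity.
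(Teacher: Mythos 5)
Your proposal is correct and follows essentially the same route as the paper's proof: identify the right side of the curved morphism condition with $\tfrac{1}{2}\phi_{\{X,X\}_V}$ via Lemma \ref{lemma:binsuv}, identify the left side with $\phi_{Q_U(X)}$ by expanding the iterated brackets of $[Q_U,X]$ with the $\iota_{e_i}$ over unshuffles (using the derived-bracket formula \eqref{eq:derbr} and evaluation at the origin), and conclude by the injectivity of $\phi$ from Lemma \ref{lemma:phiiso}. Your remark that the $i=0$ term is absent because $Q_U$ vanishes at the origin, and your suggestion to reduce the sign bookkeeping to $X=f\otimes v$, are both consistent with (and slightly more explicit than) the paper's argument.
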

\begin{proof}
     In this case, the curved variant of \eqref{li1morph} reduces to
\begin{multline}\label{eqn:dglmorph}
          \sum_{i=0}^n \sum_{\tau \in \Sh(i,n-i)} \epsilon(\tau) \phi_X (\{e_{\tau(1)}, \dots,  e_{\tau(i)}\} e_{\tau(i+1)} \cdots e_{\tau(n)} ) = \\
 \frac{1}{2} \sum_{j=0}^n \sum_{\tau \in \Sh(j,n-j)} \epsilon(\tau) \left\{ \phi_X (e_{\tau(1)} \cdots e_{\tau(j)}), \phi_{X}(e_{\tau(j+1)} \cdots e_{\tau(n)}) \right\}_V
\end{multline}
for all $n\geq 0$ and $e_1, \dots, e_n \in U$. Since $X$ is of degree $0$, Lemma \ref{lemma:binsuv} implies that the right side of \eqref{eqn:dglmorph} is equal to $\frac{1}{2} \phi_{\{X,X\}_V}(e_1 \cdots e_n)$.

On the other hand, by repeatedly applying the Jacobi identity and using the fact that $X$ is of degree $0$, we have that
\begin{multline*}
          \phi_{Q_U(X)}(e_1 \cdots e_n) =[[\dots[[Q_U,X], \iota_{e_1}] \dots],\iota_{e_n}]|_0 \\
=  \sum_{i=0}^n \sum_{\tau \in \Sh(r,n-i)} \epsilon(\tau) \left[[[\cdots [Q_U, \iota_{e_{\tau(1)}}]\dots],\iota_{e_{\tau(i)}}]\;,\; [[\cdots [X, \iota_{e_{\tau(i+1)}}]\dots],\iota_{e_{\tau(n)}}] \right]\vert_0.
\end{multline*}
This equals the left side of \eqref{eqn:dglmorph}, as one sees using the identity $[\tilde{Q},\tilde{X}]|_0=[\tilde{Q}|_0,\tilde{X}]|_0$ for $\tilde{Q}\in
 S(U^*) \otimes U$ and
 $\tilde{X}\in S(U^*) \otimes V$ (recall that $|_0$ denotes evaluation at the origin in $U$).  Therefore, \eqref{eqn:dglmorph} holds for all $e_i \in U$ if and only if $\phi_{Q_U(X)} = \frac{1}{2}\phi_{\{X,X\}_V}$. The result then follows from Lemma \ref{lemma:phiiso}.
\end{proof}

\section{\texorpdfstring{$L_{\infty}$}{L-infinity}-actions on graded manifolds}\label{sec:actionq}

Let $\cM$ be a graded manifold, and let $U$ be a finite-dimensional graded vector space. We consider the algebra $\hatS(U^*) \otimes C(\cM)$ of functions on $U \times \cM$ that are formal in $U$ and smooth in $\cM$. Slightly abusing notation, we simply refer to this algebra as $C(U \times \cM)$. Similarly, we use $\vect(U \times \cM)$ to refer to the space of vector fields that are formal in $U$ and smooth in $\cM$. 

\begin{lemma}\label{lemma:cor1} Let $\cM$ be a graded manifold, and let $U$ be a finite-dimensional graded vector space.
     The following are in one-to-one correspondence:
\begin{enumerate}
     \item degree $1$ elements $X$ of $\vect(U \times \cM)$ that are annihilated by the projection onto $U$,
     \item degree-preserving maps $\phi_X \colon \Sym U \to \vect(\cM)[1]$, and
     \item (formal in $U$) graded manifold morphisms $\eta_X \colon U \times \cM \to T[1]\cM$ that cover the identity map on $\cM$.
\end{enumerate}
\end{lemma}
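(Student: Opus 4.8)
The plan is to establish the three bijections by exhibiting the middle object, the map $\phi_X \colon \Sym U \to \vect(\cM)[1]$, as the common hub, and to use Lemma \ref{lemma:phiiso} together with standard facts about vector fields on a product of graded manifolds. First I would observe that since $U$ is a graded vector space, any vector field on $U \times \cM$ that is formal in $U$ and annihilated by the projection onto $U$ (equivalently: has no component along $\partial/\partial(\text{coordinates on }U)$) is precisely an element of $\hatS(U^*) \otimes \vect(\cM)$, i.e.\ a $\vect(\cM)$-valued formal power series on $U$. Now $\vect(\cM)$ is itself a Lie$[1]$ algebra under the (shifted) bracket of vector fields, so Lemma \ref{lemma:phiiso} applies with $V = \vect(\cM)$: the assignment $X \mapsto \phi_X$, with $\phi_X$ defined by the iterated-bracket-and-evaluate formula \eqref{eq:phi}, is a degree-preserving isomorphism from $\hatS(U^*) \otimes \vect(\cM)$ onto $\Hom(\Sym U, \vect(\cM))$. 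Restricting to degree $1$ elements $X$ on the left corresponds, under this degree-preserving isomorphism, exactly to degree-preserving maps $\Sym U \to \vect(\cM)[1]$ on the right (the shift by $1$ in the target accounting for the degree $1$ of $X$), which gives the equivalence of (1) and (2).

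For the equivalence of (2) and (3) I would unwind what a graded manifold morphism $U \times \cM \to T[1]\cM$ covering the identity on $\cM$ amounts to. Functions on $T[1]\cM$ are (the completion of) $\Omega^\bullet(\cM)$, generated over $C(\cM)$ by the degree $1$ generators $df$ for $f \in C(\cM)$; a morphism $\eta_X$ covering the identity is determined by a $C(\cM)$-algebra map sending $df$ to a degree $1$ element of $C(U\times\cM) = \hatS(U^*)\otimes C(\cM)$, depending $C(\cM)$-linearly and as a derivation on $f$ — in other words, by a degree $1$ element of $\hatS(U^*)\otimes \Der(C(\cM)) = \hatS(U^*)\otimes\vect(\cM)$, which is again object (1). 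So (3) is literally the same data as (1), hence as (2), and all that remains is to check the identifications are mutually compatible; alternatively one can phrase (3) directly as ``a formal $U$-family of vector fields on $\cM$'' since $T[1]\cM \to \cM$ is the bundle whose sections are exactly vector fields, and the universal property of $T[1]\cM$ (maps into it covering the identity $\leftrightarrow$ degree $1$ derivations of $C(\cM)$, cf.\ the standard description of the shifted tangent bundle) does the rest.

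The main obstacle, such as it is, is bookkeeping rather than mathematics: one must be careful that the degree conventions match across all three descriptions — that the degree $1$ of $X$ in (1), the shift $[1]$ in the target of (2), and the degree $1$ of the generators $df$ of $C(T[1]\cM)$ in (3) are all the same shift — and that the sign conventions in \eqref{eq:phi} and in \eqref{deca}/the higher-derived-bracket formula \eqref{eq:derbr} are used consistently, so that $\phi_X$ indeed lands in $\vect(\cM)[1]$ with the expected symmetry. I would also want to note explicitly that formality in $U$ is preserved throughout (the iterated brackets in \eqref{eq:phi} and evaluation at $0\in U$ are well-defined on formal power series), so that no convergence issues arise. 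Once these conventions are pinned down, the three correspondences are each either an instance of Lemma \ref{lemma:phiiso} or an instance of the universal property of $T[1]\cM$, and the proof is short.
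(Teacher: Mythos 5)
Your proposal is correct and follows essentially the same route as the paper: the equivalence of (1) and (2) is exactly the identification of $U$-annihilated vector fields with $\hatS(U^*)\otimes\vect(\cM)$ followed by the isomorphism of Lemma \ref{lemma:phiiso} (with the degree shift handled as you do), and the equivalence with (3) is the same observation that a morphism $U\times\cM\to T[1]\cM$ covering the identity is determined by an algebra map on $\Omega(\cM)$ fixing functions and sending (exact) $1$-forms to their contraction with $X$, i.e.\ by a degree $1$ element of $\hatS(U^*)\otimes\vect(\cM)$. The only cosmetic difference is that the paper states Lemma \ref{lemma:phiiso} with $V=\vect(\cM)[1]$ and routes both bijections through (1), while you route (3) through (2); the content is identical.
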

\begin{proof}
\emph{($1 \Leftrightarrow 2$)}  The subspace of $\vect(U \times \cM)$ consisting of vector fields that are annihilated by the projection onto $U$ can be naturally identified with $\hatS(U^*) \otimes \vect(\cM)$. The isomorphism of Lemma \ref{lemma:phiiso} (with $V = \vect(\cM)[1]$) provides a bijection between $\Hom(\Sym U,\vect(\cM)[1])$ and $\hatS(U^*) \otimes \vect(\cM)[1]$. In particular, a degree-preserving map from $\Sym U$ to $\vect(\cM)[1]$ corresponds to a degree $0$ element of $\hatS(U^*) \otimes \vect(\cM)[1]$, which in turn corresponds to a degree $1$ element of $\hatS(U^*) \otimes \vect(\cM)$.

\emph{($1 \Leftrightarrow 3$)} Given a degree $0$ element $X \in \hatS(U^*) \otimes \vect(\cM)[1]$, define a map $\eta_X: U \times \cM \to T[1]\cM$ by taking $\eta_X^* : \Omega(\cM) \to C(U \times \cM)$ to be the unique algebra morphism such that
$\eta_X^* \alpha = \innerprod{X}{\alpha}$ for all $\alpha \in \Omega^1(\cM)$ (recall that $\Omega(M)=C(T[1]M)$). Since $\eta_X^* f = f$ for $f \in \Omega^0(\cM)$, we see that $\eta_X$ covers the identity map on $\cM$. Conversely, one can see that any map from $U \times \cM$ to $T[1]\cM$, formal in $U$ and covering the identity map on $\cM$, is of the form $\eta_X$ for some $X \in \hatS(U^*) \otimes \vect(\cM)[1]$ of degree $0$.
\end{proof}

The space $\vect(\cM)$ of vector fields on $\cM$ is a graded Lie algebra, so $\vect(\cM)[1]$ is a Lie$[1]$ algebra with bracket $\{P,R\} = (-1)^{|P|} [P,R]$, where $|P|$ denotes the degree of $P$ in $\vect(\cM)$. The sign in this formula arises from the d\'{e}calage isomorphism.

Now, suppose that $U$ has the structure of an $\linfty[1]$-algebra, with homological vector field $Q_U$. Then we may also view $Q_U$ as a (horizontal) homological vector field on $U \times \cM$. The shifted tangent bundle $T[1]\cM$ is also a $Q$-manifold, where the homological vector field is the de Rham operator $d$.

For any degree $1$ element $X \in \hatS(U^*) \otimes \vect(\cM) \subseteq \vect(U \times \cM)$, let $\phi_X \in \Hom(\Sym U, \vect(\cM)[1])$ and $\eta_X : U \times \cM \to T[1]\cM$ be the corresponding objects as given by Lemma \ref{lemma:cor1}.

\begin{thm}\label{thm:equiv} Let $\cM$ be a graded manifold, and let $U$ be a finite-dimensional $\linfty[1]$-algebra, with homological vector field $Q_U$. Let $X \in \hatS(U^*) \otimes \vect(\cM)$ be of  degree $1$.
The following statements are equivalent:
\begin{enumerate}
     \item $Q_\tot \defequal X + Q_U$ is a homological vector field on $U \times \cM$.
     \item $\phi_X$ is a curved $\linfty[1]$-algebra morphism $U \rightsquigarrow \vect(\cM)[1]$.
     \item\label{three} $Q_\tot$ is $\eta_X$-related to $d$.
\end{enumerate}  
\end{thm}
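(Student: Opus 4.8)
The plan is to prove the chain of equivalences $(1) \Leftrightarrow (2) \Leftrightarrow (3)$, using Proposition \ref{prop:limor} as the bridge for $(1) \Leftrightarrow (2)$ and a direct computation with the de Rham differential for $(2) \Leftrightarrow (3)$ (or equivalently $(1) \Leftrightarrow (3)$).

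\medskip

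\textbf{Step 1: $(1) \Leftrightarrow (2)$.} First I would unpack the condition in $(1)$. Since $Q_U$ is horizontal and homological, $[Q_U, Q_U] = 0$; since $X$ is a vertical vector field on $U \times \cM$ (an element of $\hatS(U^*) \otimes \vect(\cM)$), we have $[Q_U, X] = Q_U(X)$ under the identification where $Q_U$ acts on the $\hatS(U^*)$-coefficients. Therefore
\begin{equation*}
[Q_\tot, Q_\tot] = [Q_U + X, Q_U + X] = 2 Q_U(X) + [X,X],
\end{equation*}
so $Q_\tot^2 = 0$ if and only if $Q_U(X) = -\tfrac{1}{2}[X,X]$. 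Now I would translate this into the Lie$[1]$ bracket on $V := \vect(\cM)[1]$: by the décalage sign $\{P,R\} = (-1)^{|P|}[P,R]$, and since $X$ has degree $1$ as a vector field (hence degree $0$ as an element of $\hatS(U^*) \otimes \vect(\cM)[1]$), one checks the extended bracket \eqref{eqn:binsuv} satisfies $\{X,X\}_V = -[X,X]$. Hence the condition of $(1)$ becomes exactly $Q_U(X) = \tfrac{1}{2}\{X,X\}_V$, which by Proposition \ref{prop:limor} (applied with the finite-dimensional $\li[1]$-algebra $U$ and the Lie$[1]$ algebra $V = \vect(\cM)[1]$) is equivalent to $\phi_X$ being a curved $\li[1]$-morphism $U \rightsquigarrow \vect(\cM)[1]$, i.e. statement $(2)$.

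\medskip

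\textbf{Step 2: $(1) \Leftrightarrow (3)$.} The map $\eta_X$ is characterized by $\eta_X^* \alpha = \innerprod{X}{\alpha}$ for $\alpha \in \Omega^1(\cM)$, and $\eta_X^* f = f$ for $f \in \Omega^0(\cM) = C(\cM)$. The assertion that $Q_\tot$ is $\eta_X$-related to $d$ means $\eta_X^* \circ d = Q_\tot \circ \eta_X^*$ as maps $\Omega(\cM) \to C(U \times \cM)$. Since both sides are derivations (the left over the algebra morphism $\eta_X^*$, the right because $Q_\tot$ is a derivation and $\eta_X^*$ an algebra morphism) and $\Omega(\cM)$ is generated as an algebra by $\Omega^0$ and $\Omega^1$, it suffices to check the identity on these generators. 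On $f \in \Omega^0(\cM)$: the left side gives $\eta_X^*(df) = \innerprod{X}{df} = X(f)$ (the last equality is the definition of the pairing of a vector field with an exact one-form), while the right side gives $Q_\tot(\eta_X^* f) = Q_\tot(f) = (X + Q_U)(f) = X(f)$, since $Q_U$ annihilates functions pulled back from $\cM$. So the generators of degree $0$ always match, for \emph{any} $X$. On $\alpha \in \Omega^1(\cM)$: the left side is $\eta_X^*(d\alpha)$; expanding $d\alpha$ in local coordinates and using $\eta_X^* \circ d = d \circ \eta_X^*$ on functions (just shown) plus multiplicativity, one computes $\eta_X^*(d\alpha)$ in terms of derivatives of $\innerprod{X}{dx^i}$. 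The right side is $Q_\tot(\innerprod{X}{\alpha})$. Matching these is precisely the statement that the Lie derivative identity $\lie_{Q_\tot}$ interacts correctly with $d$, and it reduces — via Cartan calculus on $U \times \cM$ — to the equation $\tfrac12[Q_\tot, Q_\tot] = 0$ contracted against $\alpha$. Concretely, writing things out in coordinates, the failure of $\eta_X^* d = Q_\tot \eta_X^*$ on $\Omega^1$ is controlled by $\innerprod{\,[Q_\tot,Q_\tot]\,}{\alpha}$, so it vanishes for all $\alpha$ iff $Q_\tot^2 = 0$, i.e. statement $(1)$.

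\medskip

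\textbf{Main obstacle.} The genuinely delicate point is Step 2, verifying $\eta_X^* d = Q_\tot \eta_X^*$ on one-forms and identifying the obstruction cleanly with $[Q_\tot, Q_\tot]$. One must be careful that $\eta_X$ is only formal in the $U$-direction, so ``local coordinates'' means coordinates on $\cM$ together with the formal coordinates on $U$; one should phrase the argument so that it is manifestly coordinate-free or at least coordinate-independent. The sign bookkeeping — tracking the degree of $X$ as a vector field versus as an element of $\hatS(U^*) \otimes \vect(\cM)[1]$, and the décalage sign in $\{\cdot,\cdot\}_V$ — is the other place where errors creep in, and it is what makes the precise identity $\{X,X\}_V = -[X,X]$ in Step 1 worth spelling out rather than asserting.
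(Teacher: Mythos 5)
Your Step 1 is exactly the paper's argument for $(1)\Leftrightarrow(2)$: expand $(X+Q_U)^2$, use $Q_U^2=0$ and $X^2=\tfrac12[X,X]$, track the d\'ecalage sign so that the condition becomes $Q_U(X)=\tfrac12\{X,X\}_V$, and invoke Proposition \ref{prop:limor} with $V=\vect(\cM)[1]$. No issues there.

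For $(1)\Leftrightarrow(3)$ your route is workable but you stop short at precisely the point you flag as the ``main obstacle,'' and that obstacle is self-inflicted by your choice of generators. You test the relation on $\Omega^0(\cM)$ and on \emph{all} of $\Omega^1(\cM)$, which forces a coordinate computation on a general $1$-form that you only sketch (``one computes \dots reduces via Cartan calculus''). The paper instead uses the fact that $\Omega(\cM)$ is locally generated by functions and \emph{exact} $1$-forms: on $f\in\Omega^0(\cM)$ the relation $Q_\tot(\eta_X^*f)=\eta_X^*df$ holds by the very definition of $\eta_X$ (as you also note), and on $df$ the right side is $\eta_X^*d(df)=0$ while the left side is $Q_\tot(\eta_X^*(df))=Q_\tot^2(f)$, so the entire delicate step collapses to the tautology ``$Q_\tot$ is $\eta_X$-related to $d$ iff $Q_\tot^2$ kills $C(\cM)$.'' Your claim that the failure on a general $\alpha\in\Omega^1(\cM)$ is controlled by the contraction of $Q_\tot^2$ with $\alpha$ is true (writing $\alpha=\sum g_i\,dh_i$ and using the Leibniz rule reduces it to the exact case), but as written it is an assertion, not a proof; either carry out that Leibniz-rule reduction or switch to the generators $f, df$ and the computation disappears. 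Finally, in both your version and the paper's, the implication $(3)\Rightarrow(1)$ needs the small observation that $Q_\tot^2$ automatically annihilates functions pulled back from $U$ (since $X$ is vertical, $Q_\tot$ acts on such functions as $Q_U$, and $Q_U^2=0$); your ``it vanishes for all $\alpha$ iff $Q_\tot^2=0$'' silently uses this, so state it, as the paper does.
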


\begin{proof}
\emph{($1 \Leftrightarrow 2$)} Proposition \ref{prop:limor} (with $V = \vect(\cM)[1]$) says that $\phi_X$ is a curved $\linfty[1]$-algebra morphism if and only if $Q_U(X) = \frac{1}{2} \{X,X\} = -\frac{1}{2} [X,X]$, where in the last step we have used the d\'{e}calage isomorphism to view $X$ as a degree $1$ vector field. Using the assumption $Q_U^2 = 0$ and the identity $\frac{1}{2}[X,X] = X^2$, we see that $(X + Q_U)^2 = 0$ if and only if $Q_U(X) = -\frac{1}{2}[X,X]$.

\emph{($1 \Leftrightarrow 3$)} To prove that $Q_\tot$ is $\eta_X$-related to $d$, it suffices (since $\Omega(\cM)$ is locally generated by functions and exact $1$-forms) to check that 
$$Q_\tot(\eta_X^*f) = \eta_X^* df\;\;\;\;\;\;\;\;\;\text{ and }\;\;\;\;\;\;\;\;\;Q_\tot(\eta_X^*(df)) = \eta_X^* d(df)$$ for all $f \in \Omega^0(\cM)$. The former equation holds automatically by the definition of $\eta_X$. In the latter equation, the right side obviously vanishes, and the left side equals  $Q_\tot^2  (\eta_X^*f) = Q_\tot^2(f)$. Since $Q_\tot^2$ automatically annihilates functions of $U$, we conclude that $Q_\tot^2 = 0$ if and only if $Q_\tot$ is $\eta_X$-related to $d$.
\end{proof}

\begin{defi}\label{def:liaction}     Let $\g$ be a finite-dimensional $\linfty$-algebra, and let $\cM$ be a graded manifold. An \emph{$\linfty$-action} of $\g$ on $\cM$ is a curved $\linfty$-algebra morphism $\g \rightsquigarrow \vect(\cM)$.
\end{defi}
The following is a direct application of Theorem \ref{thm:equiv}.
\begin{thm}\label{charvf}
Let $\g$ be a finite-dimensional $\linfty$-algebra, and let $\cM$ be a graded manifold. There is a one-to-one correspondence between 
     \begin{enumerate}
\item $\linfty$-actions of $\g$ on $\cM$
\item  homological vector fields on $\g[1] \times \cM$ for which the projection map $\g[1] \times \cM \to \g[1]$ is a $Q$-manifold morphism. 
\end{enumerate} 
Explicitly,  an $\linfty$-action is an element of $ \Hom(\wedge \g, \vect(\cM))
\cong \Hom(\Sym (\g[1]), \vect(\cM)[1])$; writing  $\phi_X$ for the latter, where $X\in S(\g[1]^*)\otimes \vect(\cM)$, the corresponding homological vector field is $Q_\tot :=X + Q_{\g[1]}$.
\end{thm}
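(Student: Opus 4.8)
The plan is to obtain Theorem \ref{charvf} as a translation of Theorem \ref{thm:equiv} through two dictionaries: the décalage isomorphism relating $\li$-algebras to $\li[1]$-algebras, and the identification $\g[1] \times \cM = U \times \cM$ with $U = \g[1]$. First I would set $U = \g[1]$, equipped with the $\li[1]$-algebra structure corresponding to the $\li$-algebra $\g$ under the bijection recalled in \S\ref{sec:der}, and let $Q_U = Q_{\g[1]}$ be its homological vector field. Then an $\li$-action of $\g$ on $\cM$, i.e.\ a curved $\li$-morphism $\g \rightsquigarrow \vect(\cM)$, corresponds via décalage to a curved $\li[1]$-morphism $\g[1] \rightsquigarrow \vect(\cM)[1]$, where $\vect(\cM)[1]$ carries the Lie$[1]$ algebra structure $\{P,R\} = (-1)^{|P|}[P,R]$ described just before the theorem. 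This uses the remark in \S\ref{sec:der} that $\li$-morphisms into a DGLA correspond to $\li[1]$-morphisms via \eqref{deca}, together with the fact that $\vect(\cM)$ is a graded Lie algebra (so in particular a DGLA with zero differential), whose shift is the Lie$[1]$ algebra in question.

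Next I would invoke Lemma \ref{lemma:cor1} to identify such curved $\li[1]$-morphisms $\phi_X \colon \Sym(\g[1]) \to \vect(\cM)[1]$ with degree $1$ elements $X \in \hatS(\g[1]^*)\otimes \vect(\cM) \subseteq \vect(\g[1] \times \cM)$ annihilated by the projection onto $\g[1]$; and then apply Theorem \ref{thm:equiv} with $U = \g[1]$ and $V = \vect(\cM)[1]$. The equivalence $(1)\Leftrightarrow(2)$ of that theorem says precisely that $\phi_X$ is a curved $\li[1]$-morphism if and only if $Q_\tot := X + Q_{\g[1]}$ is homological on $\g[1] \times \cM$. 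It remains to check that the homological vector fields on $\g[1]\times\cM$ arising this way are exactly those for which the projection $\g[1]\times\cM \to \g[1]$ is a $Q$-manifold morphism. For this I would observe that a vector field $Q_\tot$ on $\g[1]\times\cM$ projects to $Q_{\g[1]}$ under $\mathrm{pr}\colon \g[1]\times\cM \to \g[1]$ (equivalently, $\mathrm{pr}$ intertwines $Q_\tot$ and $Q_{\g[1]}$, i.e.\ $Q_\tot(\mathrm{pr}^*g) = \mathrm{pr}^*(Q_{\g[1]}g)$ for all $g\in C(\g[1])$) if and only if $Q_\tot - Q_{\g[1]}$ is annihilated by $\mathrm{pr}$, i.e.\ $Q_\tot = X + Q_{\g[1]}$ with $X \in \hatS(\g[1]^*)\otimes \vect(\cM)$. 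Thus every homological vector field with this projection property is of the form covered by Theorem \ref{thm:equiv}, and conversely.

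Finally I would record the explicit form of the correspondence as in the statement: an $\li$-action is an element of $\Hom(\wedge\g,\vect(\cM)) \cong \Hom(\Sym(\g[1]),\vect(\cM)[1])$ by the décalage isomorphism \eqref{deca} applied degreewise, and writing $\phi_X$ for the latter with $X \in S(\g[1]^*)\otimes\vect(\cM)$ via Lemma \ref{lemma:phiiso}, the associated homological vector field is $Q_\tot = X + Q_{\g[1]}$. I expect the only genuinely delicate point to be bookkeeping: making sure the chain of identifications (décalage on the source, décalage/shift on the target Lie algebra of vector fields, and Lemma \ref{lemma:cor1}'s three-way correspondence) is set up consistently, in particular that the Lie$[1]$ bracket $\{P,R\}=(-1)^{|P|}[P,R]$ is the correct décalage of the graded Lie bracket on $\vect(\cM)$, so that ``curved $\li$-morphism $\g\rightsquigarrow\vect(\cM)$'' and ``curved $\li[1]$-morphism $\g[1]\rightsquigarrow\vect(\cM)[1]$'' really are the same data. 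Once those identifications are fixed, the theorem is a formal consequence of Theorem \ref{thm:equiv} and the projection observation above; no new computation is needed.
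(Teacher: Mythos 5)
Your proposal is correct and matches the paper's route: the paper presents Theorem \ref{charvf} as a direct application of Theorem \ref{thm:equiv}, using exactly the identifications you describe (d\'ecalage on source and target, Lemma \ref{lemma:cor1}/\ref{lemma:phiiso}, and the observation that the projection is a $Q$-manifold morphism precisely when $Q_\tot - Q_{\g[1]}$ is vertical). Your write-up simply makes these bookkeeping steps explicit, which is consistent with the paper's argument.
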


\begin{remark}\label{rem:LA}
     Given an $\linfty$-action of $\g$ on $\cM$, one can geometrically interpret the associated homological vector field on $\g[1] \times \cM$ as giving the structure of a ``transformation $\linfty$-algebroid.'' Indeed, in the case where $\g$ is an ordinary Lie algebra and $M$ is an ordinary manifold, this construction reduces (up to a degree shift) to that of the usual transformation Lie algebroid. More precisely,
if $\g \to \vect(M)$ is a Lie algebra morphism, one can form the transformation Lie algebroid $\g \times M$ over $M$. The   homological vector field on $\g[1] \times M$ encoding the Lie algebroid structure is exactly $Q_{tot}$: the summand $X$   encodes the anchor map, while $Q_{\g[1]}$ encodes the Lie algebroid bracket. The anchor map $\g \times M \to TM$ is a Lie algebroid morphism, corresponding to
 the map  $\g[1] \times M\to T[1]M$, which by Thm. \ref{thm:equiv} (\ref{three}) preserves the homological vector fields.
\end{remark}

From Lemma \ref{lemma:curved}, we see that, given an $\linfty$-action   of $\g$ on $\cM$, the zero component of $\phi_X$  corresponds to a homological vector field $Q_\cM$ on $\cM$, and the higher components of $\phi_X$ then give a (noncurved) $\linfty[1]$-algebra morphism from $\g$ to the DGL$[1]$-algebra $(\vect(\cM)[1], \{Q_\cM, \cdot\}, \{\cdot,\cdot\})$. Equivalently, the zero component of 
the $\linfty$-action is $Q_\cM$ and its higher components give a (noncurved)
$\linfty$-algebra morphism from $\g$ to the DGLA $(\vect(\cM), -[Q_\cM, \cdot], [\cdot,\cdot])$. (The minus sign comes from the d\'ecalage isomorphism.) In many situations, $\cM$ already comes equipped with a homological vector field, motivating the following definition.
\begin{defi}\label{def:action}
Let $\g$ be a finite-dimensional $\linfty$-algebra, and let $(\cM, Q_{\cM})$ be a $Q$-manifold. An {$\linfty$-action} of $\g$ on $\cM$ is \emph{compatible with $Q_\cM$} if the zero component of $\phi_X$ corresponds to $Q_\cM$.
\end{defi}

The appropriate analogue of Theorem \ref{charvf} in this context is as follows:
\begin{thm}\label{charvfQM}
Let $\g$ be a finite-dimensional $\linfty$-algebra, and let $(\cM,Q_{\cM})$ be a $Q$-manifold. There is a one-to-one correspondence between 
     \begin{enumerate}
\item $\linfty$-actions of $\g$ on $\cM$ compatible with $Q_\cM$
\item  homological vector fields on $\g[1] \times \cM$ for which
\begin{equation}\label{eq:sesQM}
(\cM,Q_\cM)\to (\g[1] \times \cM, Q_{tot}) \to (\g[1],Q_{\g[1]})
\end{equation}
is a sequence of $Q$-manifold morphisms.
\end{enumerate}  \end{thm}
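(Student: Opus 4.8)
The plan is to deduce Theorem \ref{charvfQM} from Theorem \ref{charvf} by isolating the extra content, namely the interpretation of ``$(\cM,Q_\cM)\to(\g[1]\times\cM,Q_{tot})$ is a $Q$-manifold morphism''. First I would observe that the inclusion $\cM \hookrightarrow \g[1]\times\cM$ is always a graded manifold morphism, and by Theorem \ref{charvf} any $\linfty$-action gives a $Q_{tot}$ for which $\g[1]\times\cM\to\g[1]$ is a $Q$-manifold morphism; so all that needs to be checked is when the remaining map $\cM\to(\g[1]\times\cM,Q_{tot})$ is compatible with the homological vector fields. On functions, $\cM\hookrightarrow\g[1]\times\cM$ corresponds to the quotient $C(\g[1]\times\cM)=\hatS(\g[1]^*)\otimes C(\cM)\to C(\cM)$ sending $f\otimes g\mapsto \varepsilon(f)g$, i.e.\ evaluation at the origin $0\in\g[1]$ in the $\g[1]$-directions. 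Thus the condition is precisely that $Q_{tot}$, followed by evaluation at $0$, equals $Q_\cM$ followed by the same inclusion: symbolically $\iota_0^*\circ Q_{tot} = Q_\cM\circ\iota_0^*$, where $\iota_0^*$ is that evaluation.

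Next I would compute both sides on a function $g\in C(\cM)\subseteq C(\g[1]\times\cM)$. Since $Q_{tot}=X+Q_{\g[1]}$ and $Q_{\g[1]}$ annihilates functions pulled back from $\cM$, we get $Q_{tot}(g)=X(g)$, and $X(g)$ is the $\hatS(\g[1]^*)\otimes C(\cM)$-element obtained by applying the $\vect(\cM)$-valued formal power series $X$ to $g$. Evaluating at the origin $0\in\g[1]$ picks out exactly the zeroth-order term of $X$ in the $\g[1]$-directions. By the recipe \eqref{eq:phi} defining $\phi_X$, that zeroth-order term is $\phi_X|_{\Sym^0}(1)=\phi_{X,0}$, the zero component of $\phi_X$, which is a degree $1$ vector field on $\cM$. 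Hence $\iota_0^*(Q_{tot}(g)) = \phi_{X,0}(g)$ for all $g\in C(\cM)$, so the $Q$-morphism condition on the first map holds if and only if $\phi_{X,0}=Q_\cM$ as vector fields on $\cM$ — which is precisely Definition \ref{def:action} of compatibility with $Q_\cM$. (I should also note $\phi_{X,0}^2=0$ is automatic: it is the $n=0$ curved-morphism equation $\textbf{d}\phi_0+\tfrac12\textbf{\{}\phi_0,\phi_0\textbf{\}}=0$ in the Lie$[1]$ algebra $\vect(\cM)[1]$ with $\textbf{d}=0$, which holds by Theorem \ref{thm:equiv}(2), so $Q_\cM$ as produced here really is homological.)

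Putting this together: given an $\linfty$-action compatible with $Q_\cM$, Theorem \ref{charvf} supplies $Q_{tot}=X+Q_{\g[1]}$ making $\g[1]\times\cM\to\g[1]$ a $Q$-morphism, and the computation above shows that compatibility is exactly what makes $\cM\to\g[1]\times\cM$ a $Q$-morphism too, so \eqref{eq:sesQM} is a sequence of $Q$-manifold morphisms. Conversely, given a homological vector field $Q_{tot}$ on $\g[1]\times\cM$ for which \eqref{eq:sesQM} is a sequence of $Q$-morphisms, the second map being a $Q$-morphism forces (by Theorem \ref{charvf}) $Q_{tot}=X+Q_{\g[1]}$ for a unique $\linfty$-action $\phi_X$, and the first map being a $Q$-morphism then forces $\phi_{X,0}=Q_\cM$, i.e.\ the action is compatible with $Q_\cM$; these two assignments are mutually inverse because they are the restriction of the bijection in Theorem \ref{charvf}. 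The main obstacle — really the only nontrivial point — is verifying cleanly that evaluation at the origin of $X(g)$ reproduces $\phi_{X,0}$; this is a direct unwinding of \eqref{eq:phi} with no brackets applied ($n=0$), so it is short, but one must be careful that the identification $C(\g[1]\times\cM)\to C(\cM)$ induced by $\cM\hookrightarrow\g[1]\times\cM$ is indeed the origin-evaluation map and that ``the zero component of $\phi_X$'' in Definition \ref{def:action} refers to this same vector field. Everything else is formal bookkeeping with the correspondences already established.
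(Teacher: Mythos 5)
Your proposal is correct and follows essentially the route the paper intends: Theorem \ref{charvf} takes care of the second map in \eqref{eq:sesQM}, and the $Q$-morphism condition for the inclusion $\cM\hookrightarrow\g[1]\times\cM$ is unpacked, via evaluation at the origin of the $\g[1]$-directions, into the statement that the zero component of $\phi_X$ is $Q_\cM$, which is exactly the compatibility condition of Definition \ref{def:action}. The only detail left tacit is that for the converse one should also check the relatedness identity $\iota_0^*\circ Q_{\tot}=Q_\cM\circ\iota_0^*$ on the generators coming from $\g[1]^*$, which is automatic because $X$ is vertical and $Q_{\g[1]}$ vanishes at the origin of $\g[1]$.
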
 

\section{Equivalences}\label{sec:equiva}

 Let $\g$ be a finite-dimensional $\linfty$-algebra
 and let $\cM$ be a graded manifold. We use the notation of Theorem \ref{charvf}.
 
Consider the DGLA $(\vect(\g[1]\times \cM), [Q_{\g[1]}, \cdot], [\cdot,\cdot])$. The Maurer-Cartan equation for a degree $1$ element $X$ in this DGLA reads
$[Q_{\g[1]}, X]+\frac{1}{2}[X, X]=0$, so it is equivalent to
$X+Q_{\g[1]}$ being a homological vector field on $\g[1]\times \cM$.
In view of Theorem \ref{charvf}, therefore, $\li$-actions  of $\g$ on $\cM$
are in bijection with Maurer-Cartan elements $X$ of the above DGLA which are annihilated by the projection $\g[1]\times \cM \to \g[1]$.

Any nilpotent\footnote{This means that for every element of the DGLA there is a power of $\ad_{\lambda}=[\lambda,\cdot]$ annihilating it.}
 element $\lambda$ of degree $0$ in the DGLA  acts on the set of  Maurer-Cartan elements  \cite[\S1]{GoldmanMillson}, mapping a Maurer-Cartan element $X$ to
\begin{equation}\label{eqn:mcequivalent}
 X^{\lambda}:=e^{ad_{\lambda}}X+\frac{1-e^{\ad_{\lambda}}}{\ad_{\lambda}} [Q_{\g[1]}, \lambda]=e^{\ad_{\lambda}}(X+Q_{\g[1]})-Q_{\g[1]}.
\end{equation}
Notice that the action of nilpotent $\lambda$'s lying in $(\hatS(\g[1]^*)\otimes \vect(\cM))_0$  preserves the set of   Maurer-Cartan elements that are annihilated under the projection $\g[1]\times \cM \to \g[1]$, generating an equivalence relation there. 

\begin{defi}\label{def:equiv}
Two $\li$-actions   of $\g$ on $\cM$ are \emph{equivalent} if   the corresponding Maurer-Cartan elements are equivalent by the action of  nilpotent elements $\lambda\in (\hatS(\g[1]^*)\otimes \vect(\cM))_0$.\end{defi}

\begin{prop}\label{prop:iso}
Equivalent $\li$-actions of $\g$ on $\cM$ induce isomorphic $Q$-manifold structures on $\g[1]\times \cM$.
\end{prop}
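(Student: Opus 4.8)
The plan is to produce an explicit automorphism of the graded manifold $\g[1]\times\cM$ that intertwines the two homological vector fields. The key point is that the equivalence relation of Definition \ref{def:equiv} is generated by the flow of nilpotent degree $0$ elements $\lambda\in(\hatS(\g[1]^*)\otimes\vect(\cM))_0$, and such a $\lambda$ is a degree $0$ vector field on $\g[1]\times\cM$ which is \emph{vertical} with respect to the projection $\g[1]\times\cM\to\g[1]$ (it lies in the $\vect(\cM)$-component). First I would recall that any vector field $\lambda$ of degree $0$ generates a one-parameter group of graded-manifold automorphisms via its flow $\exp(\lambda)$ (acting on functions by $\sum_{k\ge 0}\frac{1}{k!}\lambda^k$); the nilpotency hypothesis guarantees this sum is well-defined on $C(\g[1]\times\cM)$ without convergence issues, since on each function a high enough power of $\ad_\lambda$, hence of $\lambda$ acting as a derivation composed appropriately, vanishes — more precisely one works degree by degree in the $\hatS(\g[1]^*)$-grading, where $\lambda$ is filtration-increasing.

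The core computation is the standard identity relating the adjoint action on vector fields to conjugation by the flow: for the derivation $\ad_\lambda=[\lambda,\cdot]$ on $\vect(\g[1]\times\cM)$ one has $e^{\ad_\lambda}(Y)=(\exp\lambda)_*\,Y$, i.e. $e^{\ad_\lambda}(Y)=(\exp\lambda)\circ Y\circ(\exp\lambda)^{-1}$ as operators on functions (equivalently, $e^{\ad_\lambda}(Y)$ is $\Phi$-related to $Y$ where $\Phi=\exp(-\lambda)$, or some sign/direction convention thereof). Applying this to $Y=Q_{tot}=X+Q_{\g[1]}$ and using the second form of \eqref{eqn:mcequivalent}, namely $X^\lambda+Q_{\g[1]}=e^{\ad_\lambda}(X+Q_{\g[1]})$, we get precisely that $Q^\lambda_{tot}:=X^\lambda+Q_{\g[1]}$ and $Q_{tot}=X+Q_{\g[1]}$ are related by conjugation by the automorphism $\exp(\lambda)$. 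Since conjugation by an automorphism sends homological vector fields to homological vector fields and is exactly what "isomorphic $Q$-manifold structures" means, this proves the claim for a single generating $\lambda$; the general case follows because equivalence is the equivalence relation generated by such moves, and a composition of $Q$-manifold isomorphisms is again one.

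I would organize the write-up as: (i) observe $\lambda$ is a well-defined degree $0$ vector field on $\g[1]\times\cM$ and its exponential $\exp(\lambda)$ is a graded-manifold automorphism, well-defined by nilpotency (working within the filtration by $\hatS(\g[1]^*)$-degree); (ii) invoke/verify the conjugation identity $e^{\ad_\lambda}(Y)=\exp(\lambda)\circ Y\circ\exp(-\lambda)$; (iii) combine with \eqref{eqn:mcequivalent} to conclude $\exp(\lambda)$ intertwines $Q_{tot}$ and $Q^\lambda_{tot}$; (iv) extend to the full equivalence relation by composing. The main obstacle is step (i): one must be careful that "nilpotent" as defined in the footnote (some power of $\ad_\lambda$ kills each element) is enough to make $\exp(\lambda)$ act well on \emph{functions}, not just on the Lie algebra of vector fields by $\ad$; the resolution is that $\lambda\in(\hatS(\g[1]^*)\otimes\vect(\cM))_0$ strictly raises the polynomial degree in the $\g[1]$-variables (its $\hatS^0$-component would be a degree $1$ element of $\vect(\cM)$, which does not occur here since the relevant $\lambda$ come from the vertical part and, being degree $0$ and polynomial-degree $\ge 1$ in $\g[1]$ — as $\g[1]^*$ sits in negative degrees — contribute nilpotently on each graded piece), so $\exp(\lambda)$ is a well-defined formal automorphism. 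I would spell this filtration argument out, as it is the only non-formal ingredient; everything else is the Goldman–Millson formalism transported verbatim to the DGLA $(\vect(\g[1]\times\cM),[Q_{\g[1]},\cdot],[\cdot,\cdot])$ already set up in this section.
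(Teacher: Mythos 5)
Your proposal is correct and is essentially the paper's own argument: exponentiate the degree $0$ vertical vector field $\lambda$ to a diffeomorphism of $\g[1]\times\cM$ and use the second equality in \eqref{eqn:mcequivalent}, $X^{\lambda}+Q_{\g[1]}=e^{\ad_{\lambda}}(X+Q_{\g[1]})$, to see that this diffeomorphism intertwines the two homological vector fields. The only small inaccuracy is in your step (i): for Definition \ref{def:equiv} the nilpotent $\lambda$ may well have a nonzero $\hatS^0$-component (a degree $0$ vector field on $\cM$) --- the restriction to $\hatS^+$ appears only in Definition \ref{def:equivQM} --- so the well-definedness of $\exp(\lambda)$ here rests on the nilpotency hypothesis (and the usual flow of the underlying degree $0$ field) rather than on the filtration argument, but this does not affect the proof.
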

\begin{proof} Consider an $\li$-action, with corresponding Maurer-Cartan element $X$ (so the associated homological vector field is $X+ Q_{\g[1]}$). For any nilpotent elements $\lambda \in (\hatS(\g[1]^*)\otimes \vect(\cM))_0$, we have by \eqref{eqn:mcequivalent} that
\begin{equation*}
  X^{\lambda}+ Q_{\g[1]}=e^{ad_{\lambda}}(X+Q_{\g[1]}).
\end{equation*}
In other words, the two homological vector fields are related by the diffeomorphism of the graded manifold $\g[1]\times \cM$ obtained by exponentiating the degree $0$ vector field $\lambda$.
\end{proof}

In practice, $\cM$ often comes equipped with a fixed homological vector field $Q_{\cM}$ (see Definition \ref{def:action}). If $\lambda$ is  any degree 0 element of $\hatS^+(\g[1]^*)\otimes \vect(\cM)$, one can show that $e^{ad_{\lambda}}$ always converges\footnote{The idea is to endow $\vect(\g[1]\times \cM)$ with a filtration induced by the  degree of polynomials  in $\hatS(\g[1]^*)$, cf. \cite[\S 1.4]{YaelZ}.} ($\hatS^+$ is defined analogously to the way $S^+$ is defined in \S\ref{sec:der}). Furthermore, as such a $\lambda$ vanishes on $\{0\} \times\cM$, its action 
preserves the set of Maurer-Cartan elements  whose restriction  to $\{0\} \times \cM$ agrees with $Q_\cM$. Hence we define:
\begin{defi}\label{def:equivQM}
Two $\li$-actions   of $\g$ on $\cM$ compatible with $Q_{\cM}$ are \emph{equivalent} if   the corresponding Maurer-Cartan elements are equivalent  by the action of  elements $\lambda\in (\hatS^+(\g[1]^*)\otimes \vect(\cM))_0$.\end{defi}

From Proposition \ref{prop:iso} we immediately obtain:
\begin{prop}\label{prop:isoQM} 
Compatible $\li$-actions of $\g$ on $\cM$ which are equivalent induce  $Q$-manifold structures on $\g[1]\times \cM$  which are isomorphic 
by   isomorphisms that commute with the maps appearing in \eqref{eq:sesQM}.
\end{prop}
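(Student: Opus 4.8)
The plan is to reduce Proposition \ref{prop:isoQM} to Proposition \ref{prop:iso} by observing that the only new claim here is that the isomorphism constructed in Proposition \ref{prop:iso} can be chosen to commute with the two structure maps in \eqref{eq:sesQM}, and that this follows from the special form of the allowed $\lambda$. First I would recall from the proof of Proposition \ref{prop:iso} that, given two equivalent compatible $\li$-actions with Maurer-Cartan elements $X$ and $X^\lambda$, the associated homological vector fields $X+Q_{\g[1]}$ and $X^\lambda+Q_{\g[1]}$ are intertwined by the graded diffeomorphism $\Psi := \exp(\lambda)$ of $\g[1]\times\cM$, where now $\lambda\in(\hatS^+(\g[1]^*)\otimes\vect(\cM))_0$; by the footnote's filtration argument, $\exp(\ad_\lambda)$ converges, and correspondingly $\exp(\lambda)$ is a well-defined automorphism of $C(\g[1]\times\cM)$, so $\Psi$ is a genuine $Q$-manifold isomorphism $(\g[1]\times\cM,X+Q_{\g[1]})\to(\g[1]\times\cM,X^\lambda+Q_{\g[1]})$.

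Next I would check compatibility with the projection $p\colon \g[1]\times\cM\to\g[1]$. Since $\lambda$ lies in $\hatS(\g[1]^*)\otimes\vect(\cM)$, it annihilates the pullback $p^*C(\g[1])=\hatS(\g[1]^*)$ of functions on the base (these are precisely the functions constant along $\cM$, and $\lambda$ is a vertical vector field for $p$). Hence $\exp(\lambda)$ fixes $p^*C(\g[1])$ pointwise, which says exactly that $p\circ\Psi = p$, i.e. $\Psi$ commutes with the projection to $\g[1]$.

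Then I would check compatibility with the inclusion $\iota\colon\cM\hookrightarrow\{0\}\times\cM\subseteq\g[1]\times\cM$. Because $\lambda\in\hatS^+(\g[1]^*)\otimes\vect(\cM)$ has no constant term in the $\g[1]$-variables, it vanishes on $\{0\}\times\cM$; concretely, $\iota^*\lambda = 0$, so $\iota^*\circ\exp(\lambda)=\iota^*$, meaning $\Psi\circ\iota=\iota$. (This is the point where the restriction to $\hatS^+$, as opposed to $\hatS$, is essential, and it is also what guarantees that $X^\lambda$ still restricts to $Q_\cM$ on $\{0\}\times\cM$, so that $X^\lambda$ is again a compatible action.) Combining the three observations, $\Psi$ is a $Q$-manifold isomorphism fitting into a commutative ladder between the two copies of \eqref{eq:sesQM}, which is the assertion.

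I do not expect a genuine obstacle here, since everything is formal once Proposition \ref{prop:iso} is in hand; the only point requiring a little care is the convergence of $\exp(\lambda)$ as an algebra automorphism (not merely of $\exp(\ad_\lambda)$ on vector fields), which is handled by the same $\hatS^{\ge n}(\g[1]^*)$-filtration argument referenced in the footnote: on each quotient by a step of the filtration $\lambda$ acts nilpotently, so the exponential series terminates. I would state this briefly rather than belabor it. The remaining verifications — that a vertical-for-$p$ vector field fixes $p^*$-functions and that a vector field vanishing on $\{0\}\times\cM$ fixes $\iota^*$ — are immediate from the definitions of pushforward/pullback of vector fields and need only a sentence each.
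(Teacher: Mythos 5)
Your proposal is correct and follows essentially the same route as the paper, which deduces the statement directly from Proposition \ref{prop:iso}: the only additional content is that $\lambda\in(\hatS^+(\g[1]^*)\otimes\vect(\cM))_0$ is vertical for the projection to $\g[1]$ and vanishes on $\{0\}\times\cM$, so the diffeomorphism $e^{\lambda}$ commutes with both maps in \eqref{eq:sesQM}. You merely spell out (correctly) the convergence and compatibility checks that the paper leaves implicit.
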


\section{Extensions of \texorpdfstring{$L_{\infty}$}{L-infinity}-algebras}\label{sec:lialg}

Let $\g$ and $\h$ be finite-dimensional $\linfty$-algebras. Following Definition \ref{def:action}, we may consider formal $\linfty$-actions of $\g$ on $\h[1]$ that are compatible with the homological vector field $Q_{\h[1]}$. By formal $\linfty$-actions, we mean curved $\linfty$-algebra morphisms from $\g$ to the graded Lie algebra of formal vector fields on $\h[1]$. 

The following proposition is an application of Theorem \ref{charvfQM}, which {also holds in} the setting of formal actions and formal vector fields. Notice that $Q_\tot$ there vanishes at the origin,
since  the map $\h[1] \to \g[1] \times \h[1]$ is a $Q$-manifold morphism.

\begin{prop}\label{sdli}
There is a one-to-one correspondence between
\begin{enumerate}
\item compatible formal $\li$-actions of $\g$ on $\h[1]$ and
\item $\li$-algebra extensions\footnote{This means that the two arrows are strict morphisms of $\li$-algebras   and that the sequence is exact. Recall that a strict $\li[1]$-algebra morphism from $V$ to $W$ is a degree-preserving linear map
$V  \to W$ which intertwines the multi-brackets.}  $\h \to \g \times \h \to \g$.
\end{enumerate}
\end{prop}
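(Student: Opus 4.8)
I would prove Proposition \ref{sdli} by unwinding both sides of the claimed correspondence into homological vector fields on $\g[1] \times \h[1]$ and matching them. By Theorem \ref{charvfQM} (in its formal version), compatible formal $\li$-actions of $\g$ on $\h[1]$ are in bijection with homological vector fields $Q_{\tot}$ on $\g[1] \times \h[1]$ fitting into a sequence of $Q$-manifold morphisms
\[
(\h[1], Q_{\h[1]}) \to (\g[1] \times \h[1], Q_{\tot}) \to (\g[1], Q_{\g[1]}).
\]
So the real content is to identify homological vector fields of this form with $\li$-algebra extensions $\h \to \g \times \h \to \g$ (equivalently, via the décalage isomorphism, with $\li[1]$-algebra extensions $\h[1] \to \g[1] \times \h[1] \to \g[1]$). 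The key observation, already noted in the excerpt, is that $Q_{\tot}$ vanishes at the origin: the inclusion $\h[1] \to \g[1] \times \h[1]$ being a $Q$-manifold morphism forces $Q_{\tot}$ to restrict to $Q_{\h[1]}$ on $\{0\} \times \h[1]$, which vanishes at $0$; and being annihilated by the projection to $\g[1]$ (as in the explicit description in Theorem \ref{charvf}) together with $Q_{\g[1]}$ vanishing at $0$ gives vanishing at the origin. Hence by Voronov's higher derived bracket construction (the correspondence recalled at the end of \S\ref{sec:der}), $Q_{\tot}$ is exactly an $\li[1]$-algebra structure on the graded vector space $\g[1] \times \h[1]$.

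**Key steps, in order.** First, I would spell out what the two $Q$-manifold morphism conditions mean at the level of brackets. The condition that the projection $\pi \colon \g[1] \times \h[1] \to \g[1]$ intertwines $Q_{\tot}$ and $Q_{\g[1]}$ says precisely that $\pi$ is a strict $\li[1]$-algebra morphism — unwinding $\pi^* \circ Q_{\g[1]} = Q_{\tot} \circ \pi^*$ via \eqref{eq:derbr} shows the multibrackets of $\g[1] \times \h[1]$ project onto those of $\g[1]$, i.e. $\g[1]$ is a sub-quotient compatibly, equivalently the brackets restricted to $\g[1]$-arguments and projected agree with $[\cdots]_{\g[1]}$. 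Second, the condition that the inclusion $\iota \colon \h[1] \to \g[1] \times \h[1]$ intertwines $Q_{\h[1]}$ and $Q_{\tot}$ says that $\h[1]$ is a strict $\li[1]$-subalgebra: the multibrackets of $\g[1] \times \h[1]$ with all arguments in $\h[1]$, projected back to $\h[1]$ (equivalently, landing in $\h[1]$) reproduce $\{\cdots\}_{\h[1]}$. Third, I would observe that exactness of $\h[1] \to \g[1] \times \h[1] \to \g[1]$ is automatic since the underlying sequence of graded vector spaces is the split short exact sequence $\h[1] \hookrightarrow \g[1] \oplus \h[1] \twoheadrightarrow \g[1]$. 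Conversely, given any $\li[1]$-algebra extension of $\g[1]$ by $\h[1]$ — with underlying space $\g[1] \oplus \h[1]$ after choosing the splitting — the corresponding homological vector field $Q_{\tot}$ on $\g[1] \times \h[1]$ automatically satisfies the two morphism conditions by the same unwinding, so Theorem \ref{charvfQM} produces the compatible formal $\li$-action. Fourth, I would check that $Q_{\tot}$ being annihilated by the projection to $\g[1]$ (which is part of the data coming out of Theorem \ref{charvfQM}) is consistent with the extension being split as a sequence of vector spaces — this is where one uses that the décalage-transported brackets of $\g \times \h$ reduce on $\g$ to those of $\g$, so $\g$ sits inside as a complement, though in general not as a subalgebra. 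Finally, I would translate back from $\li[1]$ to $\li$ via \eqref{deca}, noting that strict $\li[1]$-morphisms correspond to strict $\li$-morphisms, so the $\li[1]$-extension $\h[1] \to \g[1] \times \h[1] \to \g[1]$ is the same as the $\li$-extension $\h \to \g \times \h \to \g$.

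**Main obstacle.** The conceptual heart of the argument is already done by Theorem \ref{charvfQM}; the work that remains is essentially bookkeeping, so I expect the main obstacle to be purely notational: keeping the décalage signs straight while identifying "$Q$-manifold morphism" with "strict $\li[1]$-morphism" for the inclusion and projection, and making sure that the splitting of the vector-space sequence (which is what lets us write the total space as $\g[1] \times \h[1]$ rather than an abstract extension) is compatible with the "$X$ is annihilated by the projection to $\g[1]$" normalization built into Theorem \ref{charvf}. One subtle point worth stating carefully: an abstract $\li$-extension $\h \to E \to \g$ gives $E \cong \g \oplus \h$ only after a choice of splitting, and different splittings give equivalent actions (in the sense of Definition \ref{def:equivQM}); so the one-to-one correspondence in Proposition \ref{sdli} is really between compatible formal actions and extensions *together with a splitting*, or else one should phrase the right-hand side as extensions with underlying sequence the standard split one — I would adopt the latter reading, matching the literal statement "$\h \to \g \times \h \to \g$."
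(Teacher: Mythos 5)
Your proposal is correct and follows essentially the same route as the paper, whose proof consists precisely of invoking Theorem \ref{charvfQM} in the formal setting and observing that $Q_\tot$ vanishes at the origin (so that, by the derived-bracket correspondence of \S\ref{sec:der}, it defines an $\li[1]$-algebra structure on $\g[1]\times\h[1]$). The additional bookkeeping you supply --- identifying the two $Q$-manifold morphism conditions with strictness of the inclusion and the projection, and reading the right-hand side as extensions with the standard split underlying sequence $\h\to\g\times\h\to\g$ --- is exactly the unwinding the paper leaves implicit.
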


\begin{remark}
Given a compatible formal $\linfty$-action of $\g$ on $\h[1]$ with associated $\linfty[1]$-algebra morphism $\phi: \g[1] \rightsquigarrow \vect(\h[1])[1]$, we may explicitly describe the corresponding multibrackets on $\g[1] \times \h[1]$ as follows. For $e_i \in \g[1]$, the $\g[1]$ component of $\{e_1, \dots, e_n\}$ coincides with the $\g[1]$-multibracket. The $\h[1]$ component is $\phi(e_1 \cdots e_n)|_0$.
 
In the case of mixed entries, we have
\begin{equation}\label{mixed}
\{e_1,\dots,e_n,f_1,\dots,f_k\}=[[\dots[\phi(e_1 \cdots e_n),\iota_{f_1} ],\dots],\iota_{f_k}]|_{0}\in \h[1]
\end{equation}
for $e_i\in \g[1]$, $f_j\in \h[1]$, and $k > 0$. Because of the compatibility condition $\phi_0 = Q_{\h[1]}$, equation \eqref{mixed} reduces to the multibrackets for $\h[1]$ when $n=0$ (c.f.\ \eqref{eq:derbr}).

Notice that $\g[1]$ is an $\li[1]$-subalgebra of $\g[1] \times \h[1]$ if and only if $\phi$ takes values   in vector fields on $\h[1]$ vanishing at the origin.
\end{remark}

\begin{defi}\label{semidir} Let $\g$ and $\h$ be finite-dimensional $\li$-algebras equipped with a compatible formal $\li$-action $\phi$ of $\g$ on $\h[1]$. The associated $\li$-algebra structure on $\g \times \h$ is the \emph{extension} of $\g$ by $\h$ via $\phi$, denoted
  $\g \ltimes \h$.  We say that this extension  is a \emph{semidirect product} when $\g[1]\times \{0\}$ is   an $\li[1]$-subalgebra.
\end{defi}

 Notice that equivalent compatible actions  deliver  
extensions 
 which are $\li$-isomorphic, by Proposition \ref{prop:isoQM}.

The notion of extension of $\li$-algebras was introduced in terms of coderivations by Baez and Fr\'egier \cite{BaezFregier}, who use semidirect products and cocycles in the $\li$-category to construct such extensions.

\begin{ep}[Lie algebra extensions, see also \S \ref{sec:LA}]\label{ex:la}
	In the special case where $\g$ and $\h$ are ordinary Lie algebras, any $\linfty$-algebra extension is necessarily a Lie algebra (since it is concentrated in degree $0$). On the other hand, any compatible $\linfty$-action of $\g$ on $\h[1]$ is given by a pair $(\sigma, \psi)$, where $\sigma \colon \g \to \Der(\h)$ and $\psi: \wedge^2 \g \to \h$, such that
 \begin{align}  \label{eq:nonab}
\sigma([x,y]_{\g})-[\sigma(x),\sigma(y)] + ad_{\psi(x,y)} &= 0
 \text{ for all }& x,y \in \g,\\
\label{eq:nonab2}\psi(x,[y,z]_{\g})
+[\sigma(x),\psi(y,z)]+c.p.&=0  \text{ for all }& x,y,z \in \g.
\end{align} 
The higher components of the $\linfty$-action necessarily vanish by degree considerations. 

The equations \eqref{eq:nonab}--\eqref{eq:nonab2} are exactly those that define a nonabelian $2$-cocycle \cite{hoch} on $\g$ with values in $\h$.  Thus, in this special case, Proposition \ref{sdli} reduces to the well-known fact that Lie algebra extensions $\h \to \g \times \h \to \g$ are in correspondence with such nonabelian $2$-cocycles. The connection between such extensions and compatible $\linfty$-actions  was already observed in \cite[Prop. 2.7]{shengzhu2}.

Maps $b \colon \g \to \h$  act on the space of pairs $(\sigma, \psi)$ as in Def. \ref{def:equiv}, and the action agrees with the one described explicitly in  \cite[\S 5]{MichorExtLA}.
\end{ep}

In the following subsections, we consider some other examples of extensions.

 \subsection{\texorpdfstring{$L_{\infty}$}{L-infinity}-modules}\label{sec:modules}
In this subsection, we observe that the notion of \emph{$\linfty$-module}, due to Lada and Markl \cite{LadaMarkl}, can be seen as a special case of $\linfty$-actions.

Let $(W[1],\partial)$ be a \emph{differential graded vector space}. In other words, $W[1]$ is a graded vector space, and $\partial$ is a linear differential on $W[1]$. Then $\End(W[1])$ is a DGLA with the graded commutator bracket $[A,B]_{\comm}=AB-(-1)^{|A||B|}BA$ and differential $  
[\partial,\cdot]_{\comm}$. According to Lada and Markl \cite[Theorem 5.4]{LadaMarkl}, a \emph{$\g$-module structure} on $W$[1] is equivalent to an $\linfty$-algebra morphism 
\begin{equation*}
\Phi \colon \g \rightsquigarrow (\End(W[1]),[\cdot,\cdot]_{\comm},[\partial,\cdot]_{\comm}).
\end{equation*}
We may identify $\End(W[1])$ with  the space 
of linear vector fields on  $W[1]$, by mapping $A$ to the unique vector field $Y_A$ such that $[Y_A,\iota_w]=\iota_{Aw}$ for all $w\in W[1]$.
This determines an isomorphism of DGLAs
\begin{equation}\label{endvf}
\left(\End(W[1]),[\cdot,\cdot]_{\comm},[\partial,\cdot]_{\comm}\right)\;\cong\;
\left(
\vect_{\lin}(W[1]),[\cdot,\cdot],[Y_{\partial},\cdot]
\right).
\end{equation}

Hence we can view the $\g$-module structure as being an example of an $\linfty$-action of $\g$ on $W[1]$ compatible with  $-Y_{\partial}$.  {(The minus sign arises from the d\'{e}calage isomorphism; see the paragraph before Definition \ref{def:action}.)} On the other hand, any $\linfty$-action that is compatible with $-Y_{\partial}$ and \emph{linear} (in the sense that the vector fields on $W[1]$  {in the image of the action map} are linear) comes from a $\g$-module structure. To summarize, we have the following:

\begin{prop}\label{linvf} Let $\g$ be a finite dimensional $\linfty$-algebra and
 $(W[1],\partial)$ a differential graded vector space. There is a  one-to-one correspondence between
 \begin{enumerate}
\item $\g$-module structures on $W[1]$ 
\item linear $\li$-actions of $\g$ on  $W[1]$  that are compatible with $-Y_{\partial}$.
\end{enumerate}
\end{prop}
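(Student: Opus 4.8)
The plan is to deduce Proposition \ref{linvf} almost directly from the chain of results already established, by carefully matching up the various identifications. The statement asserts a bijection between $\g$-module structures on $W[1]$ and linear $\linfty$-actions of $\g$ on $W[1]$ compatible with $-Y_\partial$, so the work is entirely in unwinding definitions and checking that each side lands in the right place under the isomorphisms \eqref{endvf} and the d\'ecalage correspondence used before Definition \ref{def:action}.

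First I would recall the two notions precisely. By Lada--Markl's theorem (quoted in the text), a $\g$-module structure on $W[1]$ is the same as an $\linfty$-algebra morphism $\Phi\colon \g \rightsquigarrow (\End(W[1]),[\cdot,\cdot]_{\comm},[\partial,\cdot]_{\comm})$. On the other side, an $\linfty$-action of $\g$ on $W[1]$ is by Definition \ref{def:liaction} a curved $\linfty$-algebra morphism $\g \rightsquigarrow \vect(W[1])$, and by the paragraph preceding Definition \ref{def:action}, such an action being compatible with a homological vector field $Q_{W[1]}$ means its zero component is $Q_{W[1]}$ and its higher components form a (noncurved) $\linfty$-algebra morphism from $\g$ to the DGLA $(\vect(W[1]),-[Q_{W[1]},\cdot],[\cdot,\cdot])$. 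Taking $Q_{W[1]}=-Y_\partial$, compatibility means the zero component is $-Y_\partial$ and the higher components give a noncurved $\linfty$-morphism $\g \rightsquigarrow (\vect(W[1]),[Y_\partial,\cdot],[\cdot,\cdot])$ (the two minus signs cancel).

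Next I would impose linearity: a compatible action is linear precisely when the vector fields in the image of the action map all lie in $\vect_{\lin}(W[1])$ — note the zero component $-Y_\partial$ is automatically linear since $\partial$ is, so linearity is a genuine restriction only on the higher components, and $\vect_{\lin}(W[1])$ is a DGL subalgebra of $(\vect(W[1]),[Y_\partial,\cdot],[\cdot,\cdot])$ closed under the relevant operations. Thus linear compatible actions are exactly noncurved $\linfty$-morphisms $\g \rightsquigarrow (\vect_{\lin}(W[1]),[Y_\partial,\cdot],[\cdot,\cdot])$ whose zero component is $-Y_\partial$. Now apply the DGLA isomorphism \eqref{endvf}, which sends $Y_\partial \mapsto \partial$ and hence $[Y_\partial,\cdot]\mapsto[\partial,\cdot]_{\comm}$: under it, such morphisms correspond bijectively to noncurved $\linfty$-morphisms $\g \rightsquigarrow (\End(W[1]),[\cdot,\cdot]_{\comm},[\partial,\cdot]_{\comm})$. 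But an $\linfty$-morphism into a DGLA is by definition noncurved with a specified compatibility of its zero component with the differential — matching the Lada--Markl description of a $\g$-module — so the correspondence is complete, and it is clearly mutually inverse because each step is an equivalence.

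The main obstacle — really the only delicate point — is bookkeeping of signs: one must check that the d\'ecalage shifts and the sign $\{P,R\}=(-1)^{|P|}[P,R]$ in the Lie$[1]$ bracket on $\vect(W[1])[1]$ combine so that the ``$-Y_\partial$'' appearing in the compatibility condition maps to precisely the differential $[\partial,\cdot]_{\comm}$ (with the correct sign) used in the Lada--Markl morphism, rather than to $-[\partial,\cdot]_{\comm}$; this is exactly the cancellation of minus signs flagged parenthetically in the text, and I would verify it by tracing through Lemma \ref{lemma:curved} and the paragraph before Definition \ref{def:action} with $Q_{W[1]}=-Y_\partial$. I would also remark that ``linear'' is preserved in both directions: a $\g$-module morphism $\Phi$ has image in $\End(W[1])$, which \eqref{endvf} identifies with $\vect_{\lin}(W[1])$, so it yields a linear action, and conversely linearity of an action is precisely what is needed to land back in $\End(W[1])$ under \eqref{endvf}. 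Everything else is a formal consequence of Theorem \ref{thm:equiv}, Lemma \ref{lemma:curved}, and the quoted Lada--Markl theorem.
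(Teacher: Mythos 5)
Your argument is correct and is essentially the paper's own: the paper proves Proposition \ref{linvf} by exactly this chain — Lada--Markl's characterization of $\g$-modules as $\linfty$-morphisms into $(\End(W[1]),[\cdot,\cdot]_{\comm},[\partial,\cdot]_{\comm})$, the DGLA isomorphism \eqref{endvf} with $\vect_{\lin}(W[1])$, and the description (via Lemma \ref{lemma:curved} and the paragraph before Definition \ref{def:action}) of a compatible action as zero component $-Y_\partial$ plus a noncurved morphism into $(\vect(W[1]),[Y_\partial,\cdot],[\cdot,\cdot])$, including the same cancellation of minus signs. The only minor slip is your phrase that a morphism into a DGLA is ``noncurved with a specified compatibility of its zero component'': a noncurved morphism has no zero component at all (it is defined on $S^+$), but this does not affect the argument.
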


  Now suppose that $W$ is finite-dimensional. Given a linear $\linfty$-action of $\g$ on  $W[1]$, the extension $\g \ltimes W$ only has nonzero multibrackets of the following types:
\begin{itemize}
	\item[(a)] multibrackets on $\g$, and 
     \item[(b)] mixed brackets of the form $\wedge \g \otimes W \to W$, of which the zero component $W \to W$ coincides with the differential $-\partial$. More precisely, identifying $\End(W)=\End(W[1])$, we have 
 for all $v_i\in \g$ and $w\in W$:
 $[v_1,\dots,v_k,w]=(-1)^{|v_1|+\dots+|v_k|}\Phi(v_1\wedge \dots\wedge v_k)w$.
 \end{itemize}
In this case, the generalized Jacobi identity in Definition \ref{li} reduces to the equation in \cite[Definition 5.1]{LadaMarkl} that forms the original definition of $\g$-modules in terms of multibrackets. Thus, the linear version of Proposition \ref{sdli} allows us to recover the correspondence of \cite[Theorem 5.4]{LadaMarkl}.

\subsection{The adjoint module of an \texorpdfstring{$L_{\infty}$}{L-infinity}-algebra}
In this subsection we make use of Proposition \ref{sdli} to derive the definition   of adjoint representation in the  $\li$-category.
Let $\g$ be a finite dimensional $\linfty$-algebra, and let $Q_{\g[1]}$ be the associated homological vector field on $\g[1]$. Let $Q_{T\g[1]}$ denote the complete lift (in the sense of Yano and Ishihara \cite{yano-ishi}) of $Q_{\g[1]}$ to the tangent bundle $T\g[1]$. That is, if the linear functions on $T\g[1]$ are identified with $1$-forms on $\g[1]$, then $Q_{T\g[1]}$ is the unique vector field on $T\g[1]$ whose action on linear functions coincides with the Lie derivative $\lie_{Q_{\g[1]}}$.
 
To make things more explicit, choose linear coordinates $\xi^i$ on $\g[1]$, and let $\tilde{\xi}^i$ denote the corresponding fiber coordinates on $T\g[1]$. Write $Q_{\g[1]} = Q_{\g[1]}^i \pdiff{}{\xi^i}$. Then
\begin{equation}\label{Tgcoord}
	Q_{T\g[1]} = Q_{\g[1]}^i \pdiff{}{\xi^i} + \tilde{\xi}^j \pdiff{Q_{\g[1]}^i}{\xi^j} \pdiff{}{\tilde{\xi}^i}.
\end{equation}
The complete lift preserves degree and Lie brackets, so $Q_{T\g[1]}$ is a (formal) homological vector field, 
{giving an $\li$-algebra structure on $T\g$, which is an extension of $\g$. The kernel, which we denote $\tilde{\g}$, is isomorphic to $\g$ as a graded vector space, but has a different $\li$-algebra structure. Specifically, it can be seen from the coordinate description \eqref{Tgcoord} that the $1$-bracket on $\tilde{\g}$ is the same as that on $\g$, but all the higher brackets vanish, so that $\tilde{\g}$ is a differential graded vector space.}

{We may canonically identify $T\g$ with $\g \times \tilde{\g}$ as graded vector spaces. By Proposition \ref{sdli}, the $\li$-algebra extension
\begin{equation}\label{tangentext}
	\tilde{\g} \to T\g = \g \ltimes \tilde{\g} \to \g
\end{equation}
corresponds to an $\li$-action of $\g$ on $\tilde{\g}[1]$. Furthermore, since $Q_{T\g[1]}$ is a linear vector field, we have that the action is linear. By Proposition \ref{linvf}, we obtain:}
\begin{prop}\label{iadj}
	Let $\g$ be a finite-dimensional $\linfty$-algebra, and denote its $1$-bracket by $\partial$. Then  the  differential graded vector space $(\tilde{\g}[1],-\partial)$ naturally has the structure of an $\linfty$-module (see \S\ref{sec:modules}) over $\g$, whose  corresponding  extension is the $\linfty$-algebra $T\g$.
\end{prop}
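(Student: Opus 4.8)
The plan is to assemble Proposition \ref{iadj} purely from the structural results already in place, with essentially all the analytic content deferred to the coordinate formula \eqref{Tgcoord}. First I would recall that $Q_{\g[1]}$ is the formal homological vector field on $\g[1]$ corresponding to the given $\li$-algebra structure on $\g$ (via Voronov's higher derived bracket construction, as in \S\ref{sec:der}), and that the complete lift $Q_{T\g[1]}$ preserves degree and Lie brackets of vector fields. Hence $[Q_{T\g[1]},Q_{T\g[1]}]$ is the complete lift of $[Q_{\g[1]},Q_{\g[1]}]=0$, so $Q_{T\g[1]}$ is itself a formal homological vector field on $T\g[1]$. This makes $T\g$ into an $\li$-algebra. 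Since the projection $T\g[1]\to\g[1]$ intertwines $Q_{T\g[1]}$ with $Q_{\g[1]}$ (clear from \eqref{Tgcoord}: the vertical coordinates $\tilde\xi^j$ do not appear in the $\partial/\partial\xi^i$ component), and the zero section $\g[1]\to T\g[1]$ likewise intertwines the homological vector fields, the sequence $\tilde\g\to T\g\to\g$ is an $\li$-algebra extension in the sense of Proposition \ref{sdli}.

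Next I would pin down the $\li$-structure on the kernel $\tilde\g$. The kernel of $T\g[1]\to\g[1]$ is the fiber over the origin, with linear coordinates $\tilde\xi^i$; restricting $Q_{T\g[1]}$ there (equivalently, computing the Voronov brackets \eqref{eq:derbr} of $Q_{T\g[1]}$ applied only to the $\iota_{\tilde e_i}$) and reading off \eqref{Tgcoord}, one sees that the $\partial/\partial\tilde\xi^i$ component is $\tilde\xi^j\,\partial Q_{\g[1]}^i/\partial\xi^j$, which is \emph{linear} in $\tilde\xi$. Its linear part in $\tilde\xi$, which governs the $1$-bracket, reproduces the linear part of $Q_{\g[1]}$, i.e.\ exactly the $1$-bracket $\partial$ of $\g$; and there is no higher-order dependence on $\tilde\xi$, so all higher brackets of $\tilde\g$ vanish. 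Thus $\tilde\g$ (with the degree shift convention, giving $(\tilde\g[1],-\partial)$) is a differential graded vector space, matching the claim.

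Then I would invoke Proposition \ref{sdli} to translate the extension \eqref{tangentext} into a compatible formal $\li$-action of $\g$ on $\tilde\g[1]$, whose zero component is the homological vector field on $\tilde\g[1]$ — which by the previous paragraph is $-Y_{\partial}$ in the notation of \S\ref{sec:modules}. The crucial observation is that $Q_{T\g[1]}$ is a \emph{linear} vector field on $T\g[1]$ (each coordinate function of $Q_{T\g[1]}$ in \eqref{Tgcoord} is a polynomial whose dependence on the fiber coordinates $\tilde\xi$ is at most linear — the $\partial/\partial\xi^i$ component is $\tilde\xi$-independent and the $\partial/\partial\tilde\xi^i$ component is linear in $\tilde\xi$). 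Consequently the associated $\li$-action is linear in the sense of Proposition \ref{linvf}. Applying Proposition \ref{linvf}, that linear action is precisely an $\li$-module structure on $(\tilde\g[1],-\partial)$ over $\g$, and by construction the corresponding extension is $T\g$. This gives the statement.

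The main obstacle, and the only place any real verification is needed, is the second step: confirming that the complete-lift construction does exactly what the coordinate formula \eqref{Tgcoord} asserts — namely that the vertical (fiber) part of $Q_{T\g[1]}$ is linear in the fiber coordinates with linear term governed by $\partial$, and that $Q_{T\g[1]}$ restricts correctly to the kernel so that all higher brackets on $\tilde\g$ die. This is a direct computation with \eqref{Tgcoord} (or equivalently with the higher derived bracket formula \eqref{eq:derbr}), but it must be done carefully with the degree-shift and d\'ecalage sign conventions so that the differential on $\tilde\g[1]$ comes out as $-\partial$ rather than $\partial$, consistently with the sign in Proposition \ref{linvf}. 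Everything else is a formal application of Propositions \ref{sdli} and \ref{linvf}, together with the standard fact (Yano--Ishihara) that the complete lift is a Lie-algebra homomorphism of vector fields and preserves degree.
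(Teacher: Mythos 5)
Your argument is correct and follows essentially the same route as the paper: the complete lift preserves degree and Lie brackets (hence $Q_{T\g[1]}$ is homological), the kernel structure (same $1$-bracket $\partial$, vanishing higher brackets) is read off from \eqref{Tgcoord}, and then Proposition \ref{sdli} together with the fiberwise linearity of $Q_{T\g[1]}$ and Proposition \ref{linvf} yield the module structure on $(\tilde{\g}[1],-\partial)$ whose extension is $T\g$. The only nitpick is notational: the zero component of the action is the restriction of $Q_{T\g[1]}$ to the kernel, namely $Y_{\partial}$, which equals $-Y_{\partial_W}$ for the module differential $\partial_W=-\partial$ as required by Proposition \ref{linvf}, so writing it as ``$-Y_{\partial}$'' is ambiguous even though your final sign bookkeeping lands in the right place.
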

We refer to  $\tilde{\g}[1]$ as the \emph{adjoint module} of $\g$.

From the coordinate description \eqref{Tgcoord} and the derived bracket formula \eqref{eq:derbr}, we can directly see that 
the map $\phi_X \colon \Sym (\g[1]) \to \vect(\tilde{\g}[1])[1]\cong \End(\tilde{\g}[1])[1]$
is given by $\phi_X(e_1,\dots,e_k)=\{e_1,\dots,e_k,\bullet\}$.
Applying the  d\'ecalage isomorphism
\eqref{deca} we obtain the $\linfty$-module structure of Proposition \ref{iadj}, which (identifying $\End(\tilde{\g})$ with $\End(\tilde{\g}[1])$) reads
$$\wedge^+ \g \to \End(\tilde{\g}),\;\;\; v_1\wedge \dots \wedge v_k \mapsto (-1)^{|v_1|+\dots+|v_k|}[v_1,\dots v_k,\bullet].$$
Of course, when $\g$ is an ordinary Lie algebra, we recover the usual adjoint representation $\g \to \End(\g)$, $v\mapsto [v,\bullet]$.

\subsection{Representations up to homotopy of Lie algebras}

Let $\g$ be an ordinary finite-dimensional Lie algebra. In this subsection we observe that $\linfty$-modules over $\g$ are in correspondence with representations up to homotopy of $\g$, in the sense of \cite{rep-hom, gm:vba}. We note that this correspondence is essentially a special case of \cite{m:lamods}.

Let $(E,\partial)$ be a finite-dimensional differential graded vector space with an $\li$-module structure over  $\g$. By  Proposition \ref{linvf} the module structure corresponds to a linear $\li$-action of $\g$ on  $E$, compatible with the vector field $-Y_{\partial}$ on $E$ associated to $\partial$. By Theorem \ref{charvf}, such $\li$-actions correspond to degree $1$ elements 
$$X\in S(\g[1]^*) \otimes \vect_{\lin}(E)$$ 
whose zero component is $-Y_{\partial}$,
such that $X + Q_{\g[1]}$ is
a  homological vector field    on $\g[1]\times E$.
Notice that such an $X$ can be regarded as an element 
$$\omega\in S(\g[1]^*) \otimes \End(E)$$ 
via the identification \eqref{endvf}. The condition that  $X + Q_{\g[1]}$ be homological is therefore  equivalent to
$D^2=0$ for the operator $D := Q_{\g[1]} +  \omega$ on $S(\g[1]^*) \otimes E$. The latter is exactly the definition \cite{rep-hom, gm:vba} of a representation up to homotopy of $\g$ on $E$. 

Thus we have the following. 
\begin{prop} \label{prop:rephom}
	There is a one-to-one correspondence between  
	 \begin{enumerate}
	\item $\linfty$-modules $(E,\partial)$ over $\g$
	\item representations up to homotopy of $\g$ on $(E,-\partial)$.
\end{enumerate}
 An $\li$-module 
	given by maps $f_n \colon \wedge^n \g \to \End_{1-n}(E)$ ($n\ge 1$) 
	corresponds to the representations up to homotopy $\omega$ with components $\omega_n \in \wedge^n \g^*\otimes End_{1-n}(E) $ given by 
\begin{equation*}
f_n(v_1,\dots,v_n) = (-1)^{[\frac{n}{2}]} \iota_{v_n} \cdots \iota_{v_1} \omega_n.
\end{equation*}
\end{prop}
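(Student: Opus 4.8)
The plan is to unwind the chain of correspondences established earlier in the paper and match the resulting coordinate formulas against the standard definition of a representation up to homotopy. First I would fix linear coordinates $\xi^i$ on $\g[1]$ together with a basis $\{v_i\}$ of $\g$, so that $Q_{\g[1]}$ has the Chevalley--Eilenberg form $Q_{\g[1]}=\frac{1}{2}c^k_{ij}\xi^i\xi^j\pdiff{}{\xi^k}$ (here $\g$ is an ordinary Lie algebra, so only the $2$-bracket contributes). Using Proposition \ref{linvf} and Theorem \ref{charvf}, an $\li$-module structure on $(E,\partial)$ is the same as a degree $1$ element $X\in S(\g[1]^*)\otimes\vect_{\lin}(E)$ with $X_0=-Y_\partial$ and $X+Q_{\g[1]}$ homological on $\g[1]\times E$; via \eqref{endvf} this $X$ is exactly an element $\omega\in S(\g[1]^*)\otimes\End(E)$, and the computation already sketched in the text before the statement shows that $(X+Q_{\g[1]})^2=0$ is equivalent to $D^2=0$ for $D=Q_{\g[1]}+\omega$ acting on $S(\g[1]^*)\otimes E$, i.e.\ to $\omega$ being a representation up to homotopy of $\g$ on $(E,-\partial)$ in the sense of \cite{rep-hom, gm:vba}. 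This gives the bijection in items (1)--(2); the substance that remains is the explicit translation of components.

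Next I would pin down the two sets of component data. On the $\li$-module side, by Lemma \ref{lemma:curved} and Proposition \ref{linvf} the module is encoded by the maps $\phi_n\colon \Sym^n(\g[1])\to\vect_{\lin}(E)[1]\cong\End(E)[1]$, which under the d\'ecalage isomorphism \eqref{deca} become maps $f_n\colon\wedge^n\g\to\End_{1-n}(E)$ (this is exactly the form in which $\li$-modules were presented in \S\ref{sec:modules}, following \cite[Def. 5.1]{LadaMarkl}; the degree shift $1-n$ comes from $\phi_n$ having degree $1$ on $\Sym^n(\g[1])$ and the décalage sign). On the representation-up-to-homotopy side, the datum is the element $\omega=\sum_n\omega_n$ with $\omega_n\in\wedge^n\g^*\otimes\End_{1-n}(E)$, where $\omega$ is obtained from $X$ through the identification $S(\g[1]^*)\otimes\End(E)\cong\wedge\g^*\otimes\End(E)$. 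So the formula to be verified is purely the bookkeeping relating $\phi_X$ (equivalently $X$, equivalently $\omega\in S(\g[1]^*)\otimes\End(E)$) to its component functions $\phi_n$ (equivalently $f_n$, equivalently the $\omega_n\in\wedge^n\g^*\otimes\End(E)$).

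The core step is therefore a direct evaluation: apply the defining formula \eqref{eq:phi} to $X$, i.e.\ compute $\phi_X(v_{i_1}\cdots v_{i_n}) = [[\cdots[X,\iota_{v_{i_1}}],\dots],\iota_{v_{i_n}}]|_0$, where the $v_{i_j}$ are now viewed in $\g[1]$ and $\iota_v$ picks out the coefficient of the dual coordinate. Since $X$ corresponds to $\omega_n\in\Sym^n(\g[1]^*)\otimes\End(E)$, each contraction $\iota_{v}$ differentiates the polynomial part $\partial/\partial\xi$, and after $n$ contractions and evaluation at the origin one is left with the coefficient of the degree-$n$ monomial, up to a combinatorial factor and a Koszul sign. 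Passing through the d\'ecalage isomorphism \eqref{deca} to re-express the symmetric object $\omega_n$ as the skew-symmetric object sitting in $\wedge^n\g^*$ introduces the sign $(-1)^{(n-1)|v_1|+\cdots+|v_{n-1}|}$; collecting all these signs is what produces the stated factor $(-1)^{[n/2]}$ together with the reversed-order contraction $\iota_{v_n}\cdots\iota_{v_1}\omega_n$. I expect the sign tracking here — reconciling the décalage signs in \eqref{deca}, the $(-1)^{|P|}$ in the Lie$[1]$ bracket on $\vect(\cM)[1]$, and the ordering conventions in \eqref{eq:phi} — to be the only genuine obstacle; everything else is formal. The cleanest way to control it is to test the identity on a single monomial $\omega_n = \xi^{i_1}\cdots\xi^{i_n}\otimes A$ (or its skew counterpart $v^{i_1}\wedge\cdots\wedge v^{i_n}\otimes A$) and read off how $n$ successive interior products with basis vectors, followed by evaluation at $0$, reproduce $A$ with the sign $(-1)^{[n/2]}$; once it holds on monomials it holds in general by linearity. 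Finally, I would note that for $n=1$ the formula specializes to $f_1 = \iota_{v_1}\omega_1$ with the zero-th order part forcing $\omega_0 = $ (the $\End(E)$ part of) $-Y_\partial$, i.e.\ recovering the compatibility $f_1$-at-degree-$0$ $=-\partial$, consistent with the bracket $(E,-\partial)$ appearing on the representation side, which closes the argument.
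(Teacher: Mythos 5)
Your overall route is the paper's: the bijection between items (1) and (2) is exactly the discussion preceding the proposition (Proposition \ref{linvf}, Theorem \ref{charvf}, the identification \eqref{endvf}, and the equivalence of $(X+Q_{\g[1]})^2=0$ with $D^2=0$ for $D=Q_{\g[1]}+\omega$), and the component formula is to come from evaluating \eqref{eq:phi}. The gap is that the only part of the statement still requiring proof --- the explicit sign $(-1)^{[\frac{n}{2}]}$ --- is precisely the part you defer (``I expect the sign tracking \dots to be the only genuine obstacle'', ``test the identity on a single monomial''), so as written nothing beyond the already-established bijection is actually verified. Moreover, your one concrete indication of where the sign originates points in the wrong direction: since $\g$ is an ordinary Lie algebra, every $v_i$ has degree $0$, so the d\'ecalage sign $(-1)^{(n-1)|v_1|+\cdots+|v_{n-1}|}$ in \eqref{deca} is identically $+1$; the paper's proof explicitly relies on the fact that d\'ecalage introduces \emph{no} sign here, so it cannot be a contributor to $(-1)^{[\frac{n}{2}]}$.

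The factor comes solely from reordering the nested brackets in \eqref{eq:phi}. Writing $z_i=v_i[1]$, the vector field $\iota_{z_i}$ has degree $-1$, while the partial contraction $[[\dots[X,\iota_{z_1}],\dots],\iota_{z_{k-1}}]$ has degree $2-k$; flipping the $k$-th bracket therefore costs $-(-1)^{(2-k)(-1)}=(-1)^{k-1}$, and accumulating these gives
\begin{equation*}
[[\dots[X,\iota_{z_1}],\dots],\iota_{z_n}]
=(-1)^{\frac{n(n-1)}{2}}\,[\iota_{z_n},[\dots,[\iota_{z_1},X]\dots]]
=(-1)^{[\frac{n}{2}]}\,[\iota_{z_n},[\dots,[\iota_{z_1},X]\dots]],
\end{equation*}
since $n(n-1)/2$ and $[\frac{n}{2}]$ have the same parity; evaluating at the origin then turns the right-hand side into the reversed-order contraction $\iota_{v_n}\cdots\iota_{v_1}\omega_n$. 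This short computation is the entire content of the paper's proof. Your monomial-testing strategy would also suffice, but it has to be carried out, and the sign must be located in the bracket reordering (the ordering convention of \eqref{eq:phi}), not in \eqref{deca}.
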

\begin{proof}
It remains to prove the formula relating $f_n$ to $\omega_n$.
Since the d\'ecalage isomorphism \eqref{deca} does not introduce additional signs, we have
 \begin{align*}
f_n(v_1,\dots,v_n) = 
\phi_X (z_1\cdots z_n) &= [[\dots [X, \iota_{z_1}],\dots],\iota_{z_n}]|_0 \\
&= (-1)^{[\frac{n}{2}]}  [\iota_{z_n},[ \dots ,[\iota_{z_1}, X]\dots]]|_0 
= (-1)^{[\frac{n}{2}]} \iota_{v_n} \cdots \iota_{v_1} \omega_n.
\end{align*}
	where we write $z_i:=v_i[1]$ for $v_i\in \g$.
\end{proof}

\begin{remark}
     It was noted in \cite{rep-hom} that a representation up to homotopy of $\g$ on $W^*$ induces an $\linfty$-algebra structure on $\g \times W$, which ``deserves the name semidirect product.'' Proposition \ref{prop:rephom} (together with Proposition \ref{sdli}) can be interpreted as a justification of their statement, demonstrating that $\g \times W$ is indeed a semidirect product in the $\linfty$ category.
\end{remark}

\section{Extensions of Lie algebroids}\label{sec:LA}
In this section we consider $\li$-actions of Lie algebras on degree 1 $Q$-manifolds (see also \cite{ZZL}\cite{ZZL2}). This allows us to extend Example \ref{ex:la} to the case of Lie algebroids.

Let $\g$ be a Lie algebra. Let $(A,[\cdot,\cdot]_A,\rho_A)$ be a Lie algebroid over $M$, so that $A[1]$ is a degree $1$ $Q$-manifold, with homological vector field $Q_{A[1]}$ given by the Lie algebroid differential. 

Denote by $\CDO(A) \to M$ the Lie algebroid whose sections are covariant differential operators \cite{MK2} on the vector bundle $A \to M$. Let $\Gamma_A(\CDO(A))$ denote the subspace of covariant differential operators $Y$ that respect the Lie algebroid structure, in the sense that
$$Y[a,b]_A= [Ya,b]_A+[a,Yb]_A,\;\;\;\;\;\; \underline{Y}(\rho_A(a)f)=\rho_A(Ya)f+\rho_A(a)(\underline{Y}f)$$ 
for all $a,b\in \Gamma(A)$ and $f\in C^{\infty}(M)$. Here ${\underline{Y}}\in \vect(M)$ is the symbol of $Y$. 

Then $\li$-actions of $\g$ on $A[1]$ compatible with $Q_{A[1]}$ are given by linear maps
\begin{align*}
\sigma \colon& \g \to \chi_0(A[1])=\Gamma_A(\CDO(A))\\
\psi \colon& \wedge^2 \g \to \chi_{-1}(A[1])=\Gamma(A),
\end{align*}
such that analogues of \eqref{eq:nonab} and \eqref{eq:nonab2} are satisfied. 

By Theorem \ref{charvf}, we obtain a Lie algebroid structure on the vector bundle $(\g\times M)\oplus A \to M$, which we denote by $\g \ltimes A$. Explicitly, the anchor is  $(x,a)\mapsto \underline{\sigma(x)}+\rho_A(a)$ for all $x\in \g$ and $a\in A$, and the bracket is given by  
\begin{equation*}
\big[(x_1,a_1)\;,\;(x_2,a_2)\big]_{\g \ltimes A}=\big( [x_1,x_2]_{\g}\;,\;
[a_1,a_2]_{A}+\sigma(x_1)a_2-\sigma(x_2)a_1+\psi(x_1,x_2) \big).
\end{equation*}   
Theorem \ref{charvfQM} shows that $\g \ltimes A$ fits into an exact sequence of Lie algebroids
$A \to \g \ltimes A \to \g$, and that all Lie algebroid extensions of $\g$ by $A$ arise as above.   We summarize the above discussion:

\begin{prop}\label{sdla}Let $\g$ be a Lie algebra and $A$ a Lie algebroid over $M$.
There is a one-to-one correspondence between
\begin{enumerate}
\item pairs of maps $(\sigma,\psi)$ satisfying the analogues of eq. \eqref{eq:nonab} and \eqref{eq:nonab2}
\item  Lie algebroid extensions  $A \to (\g\times M)\oplus A \to \g$.
\end{enumerate}
\end{prop}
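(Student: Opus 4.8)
The plan is to realize Proposition \ref{sdla} as the ``Lie algebroid shadow'' of the supergeometric correspondence already established in Theorems \ref{charvf} and \ref{charvfQM}, specialized to the case where $\cM = A[1]$ is a degree $1$ $N$-manifold and $\g$ is concentrated in degree $0$. First I would recall that, since $A[1]$ has a degree $1$ $NQ$-manifold structure equivalent to the Lie algebroid $A$, the graded Lie algebra $\vect(A[1])$ is concentrated in degrees $\geq -1$, with $\vect_{-1}(A[1]) = \Gamma(A)$ and $\vect_0(A[1]) = \Gamma_A(\CDO(A))$ (the latter identification being the standard one between degree $0$ vector fields on $A[1]$ and covariant differential operators on $A$; this is where $\CDO(A)$ enters). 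Consequently a curved $\li$-action of $\g$ on $A[1]$, which by Definition \ref{def:liaction} is a curved $\li$-algebra morphism $\g \rightsquigarrow \vect(A[1])$, has only finitely many components surviving the degree constraints: because $\g$ is in degree $0$, the component $\phi_n \colon \wedge^n \g \to \vect(A[1])$ must land in degree $2-n$ (as a map of $\li$-algebras, unshifted), forcing $\phi_n = 0$ for $n \geq 3$, and leaving only $\sigma := \phi_1 \colon \g \to \vect_0(A[1]) = \Gamma_A(\CDO(A))$ and $\psi := \phi_2 \colon \wedge^2\g \to \vect_{-1}(A[1]) = \Gamma(A)$, plus the zero component which the compatibility condition pins to $Q_{A[1]}$.

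Next I would unwind the curved $\li$-morphism equations \eqref{li1morph} (or rather their unshifted $\li$ analogue, via the décalage isomorphism \eqref{deca}) in this truncated situation. The $n=1$ equation with the compatibility condition $\phi_0 = Q_{A[1]}$ gives precisely that $\sigma(x)$ commutes appropriately with $Q_{A[1]}$, i.e. that $\sigma(x)$ is a \emph{Lie algebroid} covariant differential operator — this is the condition defining $\Gamma_A(\CDO(A))$, and it is how the two Leibniz-type identities in the statement arise (one from the action on $\Gamma(A)$, one from the symbol/anchor compatibility). The $n=2$ equation, expanded using $[Q_{A[1]}, \iota_\psi] $ and the graded commutators, yields the analogue of \eqref{eq:nonab}: $\sigma([x,y]_\g) - [\sigma(x),\sigma(y)] + \ad_{\psi(x,y)} = 0$, where now $\ad_{\psi(x,y)}$ is the inner covariant differential operator associated to the section $\psi(x,y) \in \Gamma(A)$. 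The $n=3$ equation (whose left side involves the Jacobiator of $\g$, which vanishes since $\g$ is a Lie algebra) reduces to the analogue of \eqref{eq:nonab2}: $\psi(x,[y,z]_\g) + [\sigma(x),\psi(y,z)] + \text{c.p.} = 0$. This establishes the bijection between data $(\sigma,\psi)$ satisfying these two identities and compatible $\li$-actions of $\g$ on $A[1]$.

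Then I would feed this into Theorems \ref{charvf} and \ref{charvfQM}: the $\li$-action corresponds to a homological vector field $Q_\tot = X + Q_{\g[1]}$ on $\g[1] \times A[1]$, which (being degree $1$ and on a degree $1$ $N$-manifold, since $\g[1]$ and $A[1]$ are both degree $1$) is an $NQ$-manifold of degree $1$, hence a Lie algebroid on the vector bundle $(\g \times M) \oplus A \to M$. I would read off the anchor and bracket from $Q_\tot$ using the standard dictionary between degree $1$ $NQ$-manifolds and Lie algebroids (the anchor from the part of $Q_\tot$ that differentiates base functions, the bracket from the Jacobi-type part), obtaining exactly the formulas displayed before the Proposition: the anchor $(x,a) \mapsto \underline{\sigma(x)} + \rho_A(a)$ and the bracket combining $[\cdot,\cdot]_A$, the twist by $\sigma$, and the $\psi$ term. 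Finally, Theorem \ref{charvfQM} applied with $(\cM, Q_\cM) = (A[1], Q_{A[1]})$ gives the exact sequence of $Q$-manifold morphisms $(A[1], Q_{A[1]}) \to (\g[1] \times A[1], Q_\tot) \to (\g[1], Q_{\g[1]})$, which translates under the degree $1$ $NQ$-manifold/Lie algebroid equivalence into the exact sequence of Lie algebroids $A \to \g \ltimes A \to \g$; moreover the ``explicitly'' clause of Theorem \ref{charvf} guarantees that \emph{every} such extension arises this way, completing both directions of the correspondence.

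The step I expect to be the main obstacle is the careful identification of degree $0$ vector fields on $A[1]$ with elements of $\Gamma_A(\CDO(A))$, together with tracking all the Koszul and décalage signs so that the $n=2$ and $n=3$ curved $\li$-morphism equations literally become the stated analogues of \eqref{eq:nonab} and \eqref{eq:nonab2} with the inner operator $\ad_{\psi(x,y)}$ appearing with the correct sign. This is a purely computational but sign-sensitive verification; the conceptual content is entirely contained in the earlier theorems, so no genuinely new idea is needed — one must simply transcribe the general supergeometric statement into the classical language of covariant differential operators and Lie algebroid extensions. A secondary point requiring a remark is that the general theory was stated for graded vector spaces $\cM$, whereas here $\cM = A[1]$ is a genuine $N$-manifold (a vector bundle, not a vector space); but this causes no difficulty, as all the arguments of \S\ref{sec:actionq} are local over $M$ and go through verbatim with $C(\cM)$ replaced by the $C^\infty(M)$-algebra $\Gamma(\wedge A^*)$.
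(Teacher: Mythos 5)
Your proposal follows the paper's own route: the paper's ``proof'' is exactly the discussion preceding the proposition, which identifies $\vect_0(A[1])=\Gamma_A(\CDO(A))$ and $\vect_{-1}(A[1])=\Gamma(A)$, notes that degree reasons reduce a compatible action to the pair $\sigma=\phi_1$, $\psi=\phi_2$ satisfying the analogues of \eqref{eq:nonab} and \eqref{eq:nonab2}, and then invokes Theorems \ref{charvf} and \ref{charvfQM} to read off the Lie algebroid structure on $(\g\times M)\oplus A$ with the stated anchor and bracket, the exact sequence, and the fact that every extension arises this way. One small slip to fix: the $n$-th component of the (unshifted) morphism $\g \rightsquigarrow \vect(A[1])$ has degree $1-n$, not $2-n$; this is in fact what your subsequent conclusions use, since $1-n<-1$ for $n\geq 3$ is what forces the higher components to vanish on the degree $1$ manifold $A[1]$.
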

 
\begin{remark}  See   \cite[\S 2]{MichorExtLA} for an interpretation of the above proposition in terms of splittings in the Lie algebra case.
See \cite[\S 4.5]{MK2}\cite{BrahicExt} for a general theory of Lie algebroid extensions (in which $\g$ is allowed to be any Lie algebroid).
\end{remark}
 
\begin{ep} Let $G$ be a Lie group integrating $\g$, and consider a group action of $G$ on $A$ by Lie algebroid automorphisms. Differentiating the action we obtain a map $\sigma$ as above which moreover preserves Lie brackets. (To see this  notice that  the Lie algebroid $\CDO(A)$ integrates to the Lie groupoid over $M$ whose arrows are linear  isomorphisms between the fibers of $A$). Hence, setting $\psi=0$, by Proposition \ref{sdla} we obtain a Lie algebroid extension of $A$ and $\g$.

In particular, when $A$ is the trivial Lie algebroid over $M$,   we recover the transformation algebroid $\g \ltimes M$ as in Remark \ref{rem:LA}.
\end{ep}
 
We now comment on equivalences.
\begin{prop}  
Two compatible $\li$-actions of $\g$ on $A[1]$  are equivalent (in the sense of Definition \ref{def:equivQM}) if and only if the associated
Lie algebroid structures on $(\g \times M)\oplus A$ are isomorphic by isomorphisms that commute with the maps appearing in $A \to (\g \times M)\oplus A \to \g$.
 \end{prop}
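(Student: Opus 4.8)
The plan is to prove the two implications separately, obtaining the ``only if'' direction at once from Proposition \ref{prop:isoQM} and doing the real work for the converse. For ``only if'', I would specialize Proposition \ref{prop:isoQM} to $\cM=A[1]$. Under the equivalence between degree $1$ $NQ$-manifolds and Lie algebroids, the sequence \eqref{eq:sesQM} is exactly $A\to(\g\times M)\oplus A\to\g$, and Proposition \ref{prop:isoQM} provides an isomorphism of the two $Q$-manifold structures on $\g[1]\times A[1]$ that commutes with the maps in \eqref{eq:sesQM}. Since morphisms of graded manifolds are automatically degree-preserving, this isomorphism is in particular a morphism of the degree $1$ $N$-manifold $\g[1]\times A[1]$, hence corresponds to a Lie algebroid isomorphism of $(\g\times M)\oplus A$ with the required commutation property; so this step should be immediate.

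For the converse, suppose $\Psi$ is a Lie algebroid isomorphism of $(\g\times M)\oplus A$ commuting with $A\to(\g\times M)\oplus A\to\g$. Passing to shifted bundles, $\Psi$ becomes an isomorphism $\hat\Psi$ of the graded manifold $\g[1]\times A[1]$ that intertwines the homological vector fields $Q_\tot$ and $Q'_\tot$ of the two $\li$-actions and commutes with the inclusion $\iota\colon A[1]\hookrightarrow\g[1]\times A[1]$ and with the projection to $\g[1]$. I would then read off $\hat\Psi^*$ on generators. Commutation with $\iota$ forces $\hat\Psi$ to cover the identity on $M$, so $\hat\Psi^*$ fixes $C^\infty(M)$; commutation with the projection to $\g[1]$ forces $\hat\Psi^*\xi^i=\xi^i$ for linear coordinates $\xi^i$ on $\g[1]$; and $\hat\Psi$, being a degree $1$ $N$-manifold morphism, is linear on fibres, so $\hat\Psi^*\theta^a=\alpha^a_i\xi^i+\beta^a_b\theta^b$ for fibre coordinates $\theta^a$ on $A[1]$, with $\beta^a_b=\delta^a_b$ forced again by commutation with $\iota$ (whose pullback sets $\xi=0$). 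Hence $\hat\Psi^*=e^{\lambda}$, where $\lambda:=\alpha^a_i\xi^i\,\pdiff{}{\theta^a}$ is a degree $0$ vector field on $\g[1]\times A[1]$. A short degree count identifies $(\hatS^+(\g[1]^*)\otimes\vect(A[1]))_0$ with exactly the vector fields of this shape (one has $\vect_{-k}(A[1])=0$ for $k\ge2$ and $\vect_{-1}(A[1])=\Gamma(A)$), so $\lambda$ lies in the gauge Lie algebra of Definition \ref{def:equivQM} and $\hat\Psi$ is its time-one flow.

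To conclude, since $\hat\Psi$ then acts on vector fields as $e^{\pm\ad_\lambda}$, the relation $\hat\Psi_*Q_\tot=Q'_\tot$ becomes
\[
X'+Q_{\g[1]}=e^{\pm\ad_\lambda}(X+Q_{\g[1]}),
\]
which by \eqref{eqn:mcequivalent} says precisely that $X'$ arises from $X$ by the gauge action of $\mp\lambda\in(\hatS^+(\g[1]^*)\otimes\vect(A[1]))_0$; hence the two compatible $\li$-actions are equivalent. The step I expect to be the main obstacle is this structural part of the converse: checking that a Lie algebroid automorphism commuting with the short exact sequence is necessarily ``unipotent of the special form'' realized by the flow of an element of $(\hatS^+(\g[1]^*)\otimes\vect(A[1]))_0$, together with matching the resulting exponential against \eqref{eqn:mcequivalent} (which includes pinning down the sign convention relating the flow of $\lambda$ to $e^{\ad_\lambda}$). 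The remaining ingredients---the degree count above, convergence of $e^{\ad_\lambda}$ (noted in the paper), and the ``only if'' direction---should be routine.
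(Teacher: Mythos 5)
Your proposal is correct and follows essentially the same route as the paper: the ``only if'' direction is Proposition \ref{prop:isoQM}, and for the converse you show that an automorphism commuting with the sequence is the identity up to a shift by a linear map $\g\to\Gamma(A)$, realize it as the exponential of a degree $0$ element $\lambda\in(\hatS^+(\g[1]^*)\otimes\vect(A[1]))_0$, and invoke \eqref{eqn:mcequivalent} --- exactly the paper's argument, presented in coordinates rather than invariantly (the paper defines $\lambda$ by $[\lambda,\iota_{v[1]}]=\iota_{\phi(v)[1]}$ and finds $\Psi^*=e^{-\lambda}$, hence $\Psi_*=e^{\ad_\lambda}$). The sign you leave open is harmless for the stated conclusion, as you note, since the gauge algebra is closed under negation.
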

 
\begin{proof}
One implication is given by Proposition \ref{prop:isoQM}, so it remains to prove that the existence of an isomorphism of Lie algebroids $\psi$ as above implies that the corresponding compatible $\li$-actions are equivalent.
Consider $\psi \colon (\g\times M)\oplus A \to (\g\times M)\oplus A$,  and 
denote by $\psi^*$ the induced automorphism on sections of the exterior algebra of $((\g\times M)\oplus A)^*$.
The map $\psi$
 must be of the form $\psi(x,a)=(x,a+\phi(x))$ for a linear  $\phi \colon  \g \to \Gamma(A)$. Let $\lambda\in (S^+(\g[1]^*)\otimes \vect(A[1]))_0$ be the corresponding vector field on $\g[1]\times A[1]$, defined by $\iota_{\phi(v)[1]}=[\lambda,\iota_{v[1]}]\in \chi_{-1}(A[1])$. Then one computes 
\begin{align*}
\psi^*(\eta)&=\eta=e^{-\lambda}(\eta)\;\;\;\;\;\;\;\;\;\;\;\;\;\;\;\;\text{ for all }\eta\in \g[1]^*=C_1(\g[1])\\
\psi^*(\xi)&=\xi+\phi^*(\xi)=e^{-\lambda}(\xi)\;\;\;\;\text{ for all }\xi\in \Gamma(A[1]^*)=C_1(A[1]),
\end{align*}
implying that  the  diffeomorphism  $\Psi$ of $\g[1]\times A[1]$ 
corresponding to $\psi$ satisfies $\Psi^*=e^{-\lambda}$ (where $\Psi^*$ denotes the ``pullback of functions''). In other words, $\Psi$ is the time-1 flow of the vector field $-\lambda$.
 
Consider the Lie algebroid differentials $d_1,d_2$ encoding the two Lie algebroid structures on $(M\times \g)\oplus A$, and view them as homological vector fields $Q_1,Q_2$ on $\g[1]\times A[1]$. 
The fact that $\psi$ is a Lie algebroid isomorphism means that
$\Psi_*(Q_1)=Q_2$. Here $\Psi_*$ denotes the push-forward of vector fields on $\g[1]\times A[1]$, which by the above considerations we can write as $e^{ad_{\lambda}}$. Hence $e^{ad_{\lambda}}(Q_1)=Q_2$, which by \eqref{eqn:mcequivalent} means that the two compatible $\li$-actions of $\g$ on $A[1]$    are equivalent.
\end{proof}

\begin{ep}\label{ex:map}
Let $\phi \colon \g \to \Gamma(A)$ be any linear map. Then $$\sigma(x):=[\phi(x),\cdot]_A,\;\;\;\;\;\;\psi(x\wedge y)=[\phi(x),\phi(y)]_A-\phi([x,y]_{\g})$$
defines an $\li$-action of $\g$ on $A[1]$ compatible with $Q_{A[1]}$.
 
This $\li$ action is obtained twisting the trivial action by means of the linear map $\phi$, as in \S \ref{sec:equiva}. In particular, the  Lie algebroid structure we obtain on  $(\g\times M)\oplus A$ is isomorphic to the  product Lie algebroid structure. A concrete isomorphism is given by $(x,a_m)\mapsto (x,(a+\phi(x))_m)$ for all $x\in \g$, $a_m\in A_m$.  
\end{ep}  

\begin{remark}
A geometric situation in which the map $\phi \colon \g \to \Gamma(A)$ emerges canonically is the following. Let $G$ be a Poisson-Lie group, $(M,\pi)$   a Poisson manifold, and let $A:=T^*M$ be the cotangent algebroid. Let $G\times M \to M$ a Poisson action with (not necessarily equivariant) moment map\footnote{When the Poisson structure on $G$ is zero, this recovers the usual notion of moment map into $\g^*$.} $J \colon M \to G^*$ \cite[\S 3.3]{Lu}. This means that $J$ satisfies
  $\pi(J^*(x^l))=x_M$ for all $x\in \g$, where $x^l$ is the left-invariant 1-form on $G^*$ whose value at the identity is $x$, and $x_M$ the vector field on $M$ given by the action.
 Then one can simply define $\phi(x)=J^*(x^l)$, and by Example \ref{ex:map} obtain a canonical Lie algebroid structure on $(\g\times M)\oplus A$.
 
 Notice that, for \emph{any} Poisson action, Jiang-Hua Lu \cite[\S 4]{Lu:PoisAlgoid} defined a Lie algebroid structure on the same vector bundle. Lu's construction does not require a moment map, whereas the construction that we have described depends on the choice of moment map, so is obviously quite different.  In Lu's Lie algebroid, the mixed brackets take values in both summands, while in ours they take values only in the second summand $A$.
\end{remark}

\bibliographystyle{habbrv}
\bibliography{bibrajmarco}
\end{document}